\documentclass[12pt]{article}

\usepackage[a4paper,margin=1in]{geometry}
\usepackage[T1]{fontenc}
\usepackage{lmodern}
\usepackage{setspace}

\usepackage{amsmath,amssymb,mathtools}
\usepackage{amsthm}

\newtheorem{theorem}{Theorem}

\newcommand{\E}{\mathbb{E}}

\usepackage{algorithm}
\usepackage{algorithmic}

\usepackage{graphicx}
\usepackage[font=small,labelfont=bf]{caption}
\usepackage{subcaption}
\usepackage{booktabs}
\usepackage{array}
\usepackage{makecell}

\usepackage[numbers,sort&compress]{natbib}

\usepackage{comment}
\usepackage{appendix}
\usepackage{xurl}
\usepackage[
    colorlinks=true,
    linkcolor=blue,
    citecolor=blue,
    urlcolor=blue,
    breaklinks=true
]{hyperref}
\begin{document}
\title{Zeroth-Order Langevin Monte Carlo via SPSA under Noisy Function Measurements}

\author{
Hongbo Li%
\thanks{Hongbo Li is with the Department of Statistics and
Probability, Michigan State University, East Lansing, MI 48824,
USA. Email: lihongb1@msu.edu. Permanent email:
hongbo.24li@gmail.com.}
\quad and \quad
James C. Spall%
\thanks{James C. Spall is with the Johns Hopkins University
Applied Physics Laboratory and the Department of Applied
Mathematics and Statistics, Johns Hopkins University, Baltimore,
MD 21218, USA. Email: James.Spall@jhuapl.edu.}
}
\date{}  
\maketitle

\begin{abstract}
In sampling problems, gradient-based schemes such as Langevin Monte Carlo (LMC) mix faster than non-gradient-based methods, but their applicability is limited by access to the gradient of the target log-density. In practice, gradients are often unavailable and function evaluations are noisy---e.g., stochastic simulators or black-box simulators, so we propose LMC-SPSA with noise, which approximates the gradient of the target log-density using two noisy function evaluations per iteration. We prove, under noisy gradient estimates, that LMC-SPSA converges in distribution by proving the convergence in Wasserstein distance. Furthermore, we construct a diminishing step size schedule that still drives the Wasserstein error bound to convergence, extending convergence guarantees beyond the constant-step setting. Further, we sharpen the dominant dimension dependence of the Wasserstein error from $O(p^4)$ to $O(p^2)$ (with $p$ denoting the dimension), and support this analysis with numerical results. We show that LMC-SPSA achieves $W_2$-accuracy $\epsilon$ with total noisy-oracle complexity of $O(p/\epsilon^2+\delta^2p^3/\epsilon^3)$, where $\delta$ is the paired-noise level. This improves the noise-dependent accuracy scaling relative to the ZO-LMC method of Roy et al. We further establish asymptotically vanishing Wasserstein error as the number of iterations $\to\infty$ under diminishing step size and perturbation sequences and derive an explicit convergence rate for a balanced schedule under noisy zeroth-order feedback. Empirical experiments are conducted to verify the performance of LMC-SPSA with noise. We provide an oracle-budget-matched comparison with the ZO-LMC method, showing smaller empirical sampling errors under the same function-evaluation budget.
\end{abstract}

\section{Introduction}
Markov chain Monte Carlo (MCMC) methods are a cornerstone of modern statistical computation and stochastic simulation, enabling sampling from complex, high-dimensional distributions when normalization constants are unknown. Since the seminal Metropolis--Hastings developments, MCMC has become pervasive across statistics and machine learning, with deep monographs and handbooks documenting both theory and practice \cite{brooks2011handbook,robert2004montecarlo,hastings1970,metropolis1953}. MCMC underpins popular application areas such as Bayesian inference \cite{jiang2023sbi}, particle methods for state space models \cite{andrieu2010pmcmc}, probabilistic programming (e.g., Stan) \cite{carpenter2017stan}, and domain sciences including phylogenetics (MrBayes) \cite{ronquist2003mrbayes}.

Classical Markov chain Monte Carlo begins with the \emph{Metropolis--Hastings (MH)} algorithm, which serves as a foundational baseline. 
Building on MH, \emph{gradient-based} samplers---such as \emph{Hamiltonian Monte Carlo (HMC)}~\cite{neal2011hmc}, \emph{Stochastic Gradient Langevin Dynamics (SGLD)}~\cite{welling2011sgld}, and the Metropolis-adjusted/Unadjusted Langevin family (MALA/ULA) \cite{roberts1996langevin,roberts1998optimal,cheng2018underdamped}---often achieve faster mixing in high dimensions when the gradient vector $\nabla \log \pi(x)$ is available, where $\pi(x)$ denotes the target density. In particular, Langevin Monte Carlo (LMC) typically burns in faster than non-gradient methods such as Metropolis--Hastings by exploiting the local geometry of $\log \pi(x)$ \cite{sun23}.

However, computing $\nabla \log \pi(x)$ is often infeasible in simulation-based inference, reinforcement learning, or empirical modeling where the target distribution is a \emph{black box}: one can query (noisy) log-likelihoods but not their gradients. This motivates \emph{zeroth-order} Langevin methods that operate only with noisy function evaluations. In such settings, one replaces $\nabla \log \pi(x)$ with a \emph{stochastic-approximation-based} gradient surrogate constructed from noisy function values, i.e., a random estimator whose expectation approximates the true gradient and whose variance is controlled by a tunable perturbation scale.
A classical construction is \emph{coordinatewise finite differences (FDSA)}~\cite{chartrand2011}, which requires $2p$ function evaluations per iteration in $p$ dimensions.
We focus on \emph{simultaneous perturbation SA (SPSA)}, which perturbs all coordinates with independent signs and uses only \emph{two} evaluations per iteration to form a stochastic gradient estimate \cite{spall92,spall94,spall05}. Under a fixed function-evaluation budget, SPSA can attain a smaller mean squared error (MSE) for the resulting gradient estimator than coordinatewise finite differences in noisy regimes \cite{blakney2019fdvsspsa}; formal comparisons with random-direction SA (RDSA) further identify regimes where SPSA achieves smaller asymptotic MSE, supported by numerics \cite{chen2021arxiv,peng2023acc}.

Beyond FDSA/RDSA/SPSA, there are \emph{first-order} Langevin variants that reduce per-iteration cost by using \emph{coordinate-wise gradients} $(\partial f/\partial x_i)$. 
For example, Ding and Li develop \emph{random-coordinate} LMC with variance-reduction, updating along randomly chosen coordinates while still requiring access to coordinatewise gradients \cite{dingli2021rcdvr}. There also exist zeroth-order samplers based on \emph{one-point} gradient surrogates: Spall proposed a one-measurement variant of SPSA that forms a (asymptotically) unbiased gradient surrogate using only one noisy function evaluation per iteration—halving the oracle calls relative to standard two-measurement SPSA \cite{spall1997onemeasurement}. In a similar spirit, Liu and Wang \cite{liu2020onepointzosgld} propose a one-point zeroth-order stochastic gradient Langevin dynamics (ZSGLD) that uses a single noisy function evaluation to form a (biased) gradient estimator within SGLD.

In many simulation- or privacy-constrained applications \cite{jiang2023sbi, pavliotis2022, bao2025}, the oracle returns only \emph{noisy function evaluations} (e.g., noisy log-likelihoods), while exact or unbiased gradients are unavailable. This motivates zeroth-order gradient surrogates tailored to noisy measurements. A parallel but distinct line of work learns a score function and then uses the learned proxy in Langevin dynamics. In score-based generative modeling, Song and Ermon~\cite{song2019score} train a score network via score matching and use the learned scores in annealed Langevin dynamics. Unlike the function-value zeroth-order setting considered here, this approach does not construct gradient estimates from noisy pointwise evaluations of the target potential.

Motivated by the need for cost-efficient, noise-robust zeroth-order surrogates, we propose \emph{LMC-SPSA}: each Langevin step uses a two-point SPSA surrogate to form a (nearly) unbiased gradient approximation whose oracle cost is independent of $p$. This contrasts with 
first-order random-coordinate LMC \cite{dingli2021rcdvr}, which still requires access to coordinatewise derivatives. Our analysis follows the line of LMC with inexact/noisy gradients admitting non-asymptotic guarantees under controlled gradient error and stepsizes \cite{dalalyan19}. Within the Hamiltonian Monte Carlo (HMC) family, SPSA has also been used to approximate gradients: Hiruntiaranakul \cite{hiruntiaranakul2024spsahmc} analyzes SPSA-HMC and two variance–reduction variants, which are complementary to our Langevin focus.

The main contribution of the work is summarized as follows:
\begin{itemize}
  \item We prove Wasserstein ($W_2$)-convergence of \emph{LMC-SPSA} under noisy function evaluations with explicit non-asymptotic bounds.
  \item We design a diminishing stepsize schedule that preserves $W_2$-convergence, removing the constant-stepsize restriction in \cite{sun23}.

  \item We sharpen the dominant dimension dependence in the $W_2$-error bound from $O(p^4)$ in \cite{sun23} to $O(p^2)$.

  \item We show that LMC-SPSA achieves \(W_2\)-accuracy \(\varepsilon\) with total
  noisy-oracle complexity of \(O(p/\varepsilon^2+\delta^2p^3/\varepsilon^3)\), where \(\delta\) is the paired-noise level in Assumption A5. This improves the noise-dependent accuracy scaling of ZO-LMC in \cite{Roy2022} from \(\varepsilon^{-4}\) to \(\varepsilon^{-3}\). Under perfectly paired noise (\(\delta=0\)), the
  complexity reduces to
  \(O(p/\varepsilon^2)\).

  \item We establish vanishing \(W_2\) error under diminishing
  stepsize and perturbation sequences, and derive a balanced schedule attaining
  \(W_2(\nu_k,\pi)=
  \mathcal O(\delta^{2/3}p\,k^{-1/3})\) for fixed
  \(\delta>0\), where \(k\) denotes the
iteration index.
  
  \item We also provide oracle-call-matched numerical comparisons with the ZO-LMC,
  showing that LMC-SPSA attains smaller empirical sampling errors under the same function-evaluation budget.

\end{itemize}

\section{Preliminaries \& Baselines}
\label{sec:prelim-baselines}
\paragraph{Noisy zeroth-order oracle.}
In this work, given a density function $\pi:\mathbb{R}^p \to \mathbb{R}$ with potential
function $f:\mathbb{R}^p \to \mathbb{R}$ satisfying $\pi(x)\propto e^{-f(x)}$, we study
the problem of sampling when the potential $f$ is not directly available and can only be
accessed through noisy function evaluations. We focus on an additive-noise oracle
model
\[
\tilde f(x)=f(x)+\varepsilon,
\]
where $\varepsilon$ denotes the noise terms. We refer to
this setting as \emph{stochastic zeroth-order sampling}. This is the regime relevant to our
LMC-SPSA method; see
also Section 4 in \cite{Roy2022} for a related discussion of stochastic zeroth-order sampling models.
Such noisy function-value oracles arise, for example, in simulation-based inference \cite{ pavliotis2022} where log-likelihood evaluations are obtained via stochastic simulators.

\noindent The starting point is the overdamped Langevin diffusion
\[
dL_t = -\nabla f(L_t)\,dt+\sqrt{2}\,dB_t,
\]
which has \(\pi\) as its invariant distribution under standard regularity
conditions. Applying the Euler--Maruyama discretization with stepsize \(h_k\)
gives the unadjusted Langevin update
\[
X_{k+1}
=
X_k-h_k\nabla f(X_k)+\sqrt{2h_k}\,\xi_k,
\qquad
\xi_k\sim N(0,I_p).
\]
In the zeroth-order setting, \(\nabla f(X_k)\) is unavailable and is replaced by
a gradient surrogate \(\widehat G_{k}(X_k)\) constructed from noisy function
evaluations. The zeroth-order Langevin method constructs a Markov chain $\{X_k\}$ via the recursion:
\begin{equation}\label{eq:lmc-spsa-update}
  X_{k+1} \;=\; X_k \;-\; h_k\,\widehat G_k(X_k) \;+\; \sqrt{2h_k}\,\xi_k,
  \qquad \xi_k\sim\mathcal N(0,I_p),\quad k=0,1,2,\ldots
\end{equation}
where $h_k>0$ is the stepsize at iteration $k$, and $\xi_k$ is an independent $p$-dimensional standard Gaussian noise.

\paragraph{Coordinate formulas for FDSA/SPSA.}
Let $c_k>0$, and let $e_i$ be the $i$th coordinate basis vector.
For a fixed coordinate $i$, finite-difference stochastic approximation (FDSA) and simultaneous perturbation stochastic approximation (SPSA) define the following coordinatewise gradient estimators:
\begin{align}
\text{FDSA:}~~
&\widehat G^{\text{FD}}_{ki}(X_k)
=\frac{\tilde f(X_k+c_k e_i)-\tilde f(X_k-c_k e_i)}{2c_k},
\label{eq:fdsa}
\\[-0.25em]
\text{SPSA:}~~
&\widehat G^{\text{SP}}_{ki}(X_k)
=\frac{\tilde f(X_k+c_k \Delta_k)-\tilde f(X_k-c_k \Delta_k)}{2c_k\,\Delta_{ki}},
\label{eq:spsa}
\end{align}
where $\Delta_k\in\{\pm1\}^p$ is a random perturbation vector. where \(\Delta_k=(\Delta_{k1},\ldots,\Delta_{kp})^\top\) is a random
perturbation vector. Throughout the main analysis we take
\(\Delta_{ki}\), \(i=1,\ldots,p\), to be independent Rademacher random
variables, so that
\[
\mathbb P(\Delta_{ki}=1)=\mathbb P(\Delta_{ki}=-1)=\frac12 .
\] In SPSA, the coordinates $\{\Delta_{ki}\}$ are i.i.d.\ Bernoulli $\pm1$ in most implementations; variants using
\emph{U-shaped} or \emph{split-uniform} distributions (placing negligible mass near $0$) have also been explored
in control/NN applications \cite{spall05,cao2011acc,maeda1997}. These choices satisfy the finite inverse-moment condition $\,\E|\Delta_{ki}|^{-1}<\infty\,$ required by \eqref{eq:spsa}. To form the full gradient estimator at iteration $k$, FDSA requires $2p$ noisy function evaluations, since the two-point difference must be computed separately for each coordinate $i=1,\dots,p$. In contrast, SPSA reuses the same two noisy function evaluations across all coordinates and therefore requires only $2$ noisy function evaluations per iteration.

\paragraph{Roy et al. \cite{Roy2022}\ one-point ZO-LMC baseline (Section~4 / Theorem~4.2).}
To match the one-point stochastic setting with independent noise per function evaluation in Roy et al.~\cite{Roy2022} (Section~4), we use the same additive
function-value oracle as above, specialized to the mean-zero finite-variance
case:
\[
E[\varepsilon]=0,\qquad
\mathbb E[\varepsilon^2]=\sigma^2,
\]
with an independent draw of \(\varepsilon\) at each function evaluation. In particular, within the $j$th summand we write
\[
\tilde f(x+\nu u_{kj}) = f(x+\nu u_{kj})+\varepsilon_{kj}^{(+)},\qquad
\tilde f(x) = f(x)+\varepsilon_{kj}^{(0)},
\]
with $\varepsilon_{kj}^{(+)}$ and $\varepsilon_{kj}^{(0)}$ independent (and i.i.d.\ across $j$ and $k$).

Roy et al.\ estimate $\nabla f$ via Gaussian smoothing using $b$ i.i.d.\ Gaussian directions
$u_{kj}\sim\mathcal N(0,I_p)$ and a \emph{one-sided} estimator:
\begin{equation}
\label{eq:roy-grad}
\widehat G^{\text{ROY}}_k(x)
=\frac{1}{b}\sum_{j=1}^b
\frac{\tilde f(x+\nu u_{kj})-\tilde f(x)}{\nu}\,u_{kj}.
\end{equation}
In each summand, the two noisy function evaluations carry independent noises $\varepsilon_{kj}^{(+)}$ and $\varepsilon_{kj}^{(0)}$.
This baseline uses $2b$ noisy function evaluations per iteration, and $\tilde f(x)$ is resampled at each summand. The batch size $b$ controls the variance reduction
of the estimator under independent oracle noise.

\paragraph{Fair-budget protocol and metrics.}
All methods are compared under the same total number of \emph{oracle calls} (noisy function evaluations) $B$.
Thus the iteration counts are
\[
K_{\text{SP}}=\big\lfloor B/2\big\rfloor,\qquad
K_{\text{FD}}=\big\lfloor B/(2p)\big\rfloor,\qquad
K_{\text{ROY}}=\big\lfloor B/(2b)\big\rfloor,
\]
reflecting the per-iteration oracle costs of SPSA ($2$ calls), FDSA ($2p$ calls), and Roy's estimator ($2b$ calls).
We use the squared error of the post-burn-in empirical mean relative to the
true target mean as our main numerical diagnostic, rather than as a direct
estimate of Wasserstein distance. To assess distributional accuracy beyond the first moment,
we also report first-, second-, and fourth-moment errors as secondary
diagnostics.

\section{LMC-SPSA Algorithm}
We first recall the update rule of the standard Langevin Monte Carlo (LMC). Given a target density $\pi(x)\propto e^{-f(x)}$, LMC constructs a Markov chain $\{X_k\}$ via the recursion:
\begin{equation}\label{eq:ula}
  X_{k+1} \;=\; X_k \;-\; h_k\,\nabla f(X_k) \;+\; \sqrt{2h_k}\,\xi_k,\qquad \xi_k\sim\mathcal N(0,I_p).
\end{equation}
where $h_k>0$ is the stepsize at iteration $k$, and $\xi_k$ is an independent $p$-dimensional standard Gaussian noise. This corresponds to one step of the Euler-Maruyama discretization of an Langevin diffusion. 
Unadjusted Langevin algorithm (ULA) can mix faster than random-walk MH by leveraging gradient information \cite{sun23}, but a fixed stepsize $h_k\equiv h$ generally induces a stationary bias; shrinking $h_k$ can reduce this bias at the cost of slower per-iteration progress.

\textbf{Gradient Approximation under Noise}: In the LMC-SPSA algorithm, We replace $\nabla f(X_k)$ in \eqref{eq:ula} with a two-point SPSA estimate built from noisy function values $\tilde f$.
Let $c_k>0$. With two noisy evaluations of $f$ at $X_k\pm c_k\Delta_k$, define

\begin{equation}
\tilde{G}_k = \frac{\tilde{f}(X_k + c_k\Delta_k) - \tilde{f}(X_k - c_k\Delta_k)}{2c_k}\Delta_k^{-1},
\end{equation}
\noindent where \(\tilde{f}(X_k \pm c_k\Delta_k) = f(X_k \pm c_k\Delta_k) + \varepsilon_k^{\pm}\) with $\varepsilon_k^{\pm}$ representing measurement noise terms that satisfy
\begin{equation}\label{eq:mean0}
\mathbb{E}\!\left[\varepsilon_k^{(+)} - \varepsilon_k^{(-)} \,\middle|\, \Delta_k, X_k\right]
= 0 \quad \text{a.s. } \forall k\in\mathbb{N}.
\end{equation}

\noindent where $\Delta_k^{-1}$ denotes the entry-wise inverse of $\Delta_k$. Let us take
$\Delta_k\in\{\pm1\}^p$ i.i.d.\ (Rademacher), so that $\Delta_k^{-1}=\Delta_k$.
Using a Taylor expansion with Lagrangian remainder around $X_k$, we have
\begin{equation*}
\begin{aligned}
\tilde f(X_k \pm c_k\Delta_k)
&= f(X_k) + \varepsilon_k^{\pm}
\pm c_k\nabla f(X_k)^\top \Delta_k
+ \frac{c_k^2}{2}\Delta_k^\top \nabla^2 f(X_k)\Delta_k \\
&\quad \pm \frac{c_k^3}{6}\nabla^3 f^{(\pm)}\otimes \Delta_k \otimes \Delta_k \otimes \Delta_k,
\end{aligned}
\end{equation*}

\noindent where $\nabla^3 f^{(\pm)}$ is the third-order derivative in the Lagrangian remainder.
Thus, by \eqref{eq:mean0} and Lemma~1 of \cite{spall92}, 

\[
\begin{aligned}
\mathbb E[\widetilde G_k\mid X_k]
&=
\nabla f(X_k)
+
\mathbb E\left[
\frac{\varepsilon_k^{(+)}-\varepsilon_k^{(-)}}{2c_k}\Delta_k^{-1}
\;\middle|\; X_k
\right]
+
O(c_k^2) \\
&=
\nabla f(X_k)+O(c_k^2),
\end{aligned}
\]
Therefore, $\tilde{G}_k$ is conditionally nearly unbiased, with bias of order $O(c_k^2)$; in particular, the bias vanishes as $c_k\to 0$.
\begin{algorithm}[t]
\caption{LMC-SPSA Sampling Algorithm}
\begin{algorithmic}[1]
\REQUIRE Initial state \(X_0\); stepsize sequence \(\{h_k\}_{k\ge0}\); perturbation sequence \(\{c_k\}_{k\ge0}\); number of iterations \(N\).
\FOR{\(k = 0,\dots,N-1\)}
\STATE Generate \(\Delta_k \in \{\pm1\}^p\) from Rademacher distribution.
\STATE Evaluate noisy log-density: \(\tilde{f}(X_k \pm c_k\Delta_k) = f(X_k \pm c_k\Delta_k) + \varepsilon_k^{\pm}\).
\STATE \textbf{Compute SPSA gradient estimate:}
\begin{equation*}
\tilde{G}_k = \frac{\tilde{f}(X_k + c_k\Delta_k) - \tilde{f}(X_k - c_k\Delta_k)}{2c_k}\,\Delta_k^{-1}
\end{equation*}
\STATE Update the chain: \(X_{k+1} = X_k - h_k\tilde{G}_k + \sqrt{2h_k}\xi_{k+1}\)
\ENDFOR
\ENSURE Samples \(\{X_k\}_{k=1}^N\).
\end{algorithmic}
\end{algorithm}

\noindent The pseudocode for the LMC-SPSA algorithm is provided in Algorithm~1. In practice, the sequences $h_k$ and $c_k$ (stepsize and perturbation magnitude) are tuned by the user. In the next section, we provide theoretical conditions on these sequences that ensure convergence of the LMC-SPSA algorithm.

\section{Convergence Analysis}
In this section, we analyze the convergence of LMC-SPSA in the 2-Wasserstein distance ($W_2$), a standard metric for convergence of stochastic processes. We first provide a convergence theorem for LMC-SPSA with constant stepsize, which characterizes the bias introduced by finite stepsize and SPSA approximation. We then show how a shrinking stepsize schedule can be designed to eliminate this bias asymptotically. Finally, we present an improved error bound with reduced dependence on the dimension $p$ from $O(p^4)$ in \cite{sun23} to $O(p^2)$.

\subsection{Notation and Assumptions}
In this work, we measure convergence in the quadratic Wasserstein distance $W_2$.
For probability measures $\mu,\nu$ on $\mathbb{R}^p$, the quadratic Wasserstein distance is
\begin{equation}\label{eq:wasserstein2}
W_2(\mu,\nu)
=
\inf_{U\sim\mu,\;V\sim\nu}
\left(\mathbb E\|U-V\|_2^2\right)^{1/2},
\end{equation}
where the infimum is over all couplings \((U,V)\) with marginal laws
\(\mu\) and \(\nu\), respectively.
This choice aligns with established Langevin analyses: under smooth strong log-concavity and a suitable stepsize, the ULA transition
kernel \(P_h\) is contractive in \(W_2\) \cite{durmus2017}, meaning that
\[
W_2(\mu P_h,\nu P_h)
\le
\rho_h W_2(\mu,\nu)
\quad\text{for some } \rho_h<1.
\]
Let $\nu_k := \mathcal{L}(X_k)$ denote the distribution of $X_k$; our goal is therefore
\begin{equation}\label{eq:W2goal}
W_2(\nu_k,\pi)\longrightarrow 0\quad\text{as }k\to\infty.
\end{equation}
To rigorously analyze convergence, we impose the following assumptions on the log-density \(f(x) = -\log \pi(x)\):

\textbf{Assumption A1} \textit{(Strong Convexity)}:
There exists \(m > 0\) such that for all \( x, y \in \mathbb{R}^p \),
\[
f(x) - f(y) - \nabla f(y)^\top (x - y) \ge \frac{m}{2} \|x - y\|_2^2.
\]

\textbf{Assumption A2} \textit{(Lipschitz Continuity)}:
There exists a constant \(M > 0\) such that for all \( x, y \in \mathbb{R}^p \),
\[
\|\nabla f(x) - \nabla f(y)\|_2 \le M\|x - y\|_2.
\]

\textbf{Assumption A3}
\textit{(Bounded Third Derivative)}:
There exists $M_3>0$ such that for all $x\in\mathbb{R}^p$,
\[
\|\nabla^3 f(x)\|_{\max}
:=\max_{i,j,k}\left|\frac{\partial^3 f(x)}{\partial x_i\partial x_j\partial x_k}\right|
\le M_3.
\]

\textbf{Assumption A4} \textit{(Bounded Gradient Expectation)}:
There exists $M_1>0$ such that
\[
\sup_{k\ge 0}\,\E\big[\|\nabla f(X_k)\|_2\big]\;\le\; M_1.
\]
The constants \(M_1\) and \(M_3\) denote the bounds related to the first and third derivatives of the log-density function \(f\), respectively. Here \(M_1\) may depend on the dimension \(p\), but it is assumed to grow at most linearly, i.e., \(M_1=O(p)\). Hence it does not affect the leading-order dimension dependence in the later bounds.

\textbf{Assumption A5} \textit{(Paired noise conditions).}
    Let \(\mathcal F_k\) denote the history before the two oracle evaluations at
    iteration \(k\). For the two SPSA queries
    \(X_k+c_k\Delta_k\) and \(X_k-c_k\Delta_k\), write
    \[
    \widetilde f(X_k+c_k\Delta_k)
    =
    f(X_k+c_k\Delta_k)+\varepsilon_k^{(+)},
    \qquad
    \widetilde f(X_k-c_k\Delta_k)
    =
    f(X_k-c_k\Delta_k)+\varepsilon_k^{(-)}.
    \]
    We assume that
    \[
    \mathbb E[
    \varepsilon_k^{(+)}-\varepsilon_k^{(-)}
    \mid \mathcal F_k,\Delta_k
    ]
    =
    0,
    \]
    and that, for some constant \(\delta>0\),
    \[
    \mathbb E\!\left[
    \left(\varepsilon_k^{(+)}-\varepsilon_k^{(-)}\right)^2
    \mid \mathcal F_k,\Delta_k
    \right]
    \le
    2\delta^2.
    \]
    The paired noise difference is conditionally independent of the current
    Gaussian innovation \(\xi_k\) given \((\mathcal F_k,\Delta_k)\).

\emph{Remarks.} Verifying A4 is nontrivial; it is a \emph{uniform-in-$k$} bound that depends on the tail behavior of the chain and is typically derived from drift/dissipativity and suitable stepsizes rather than checked directly. For discussion and examples (exponential-family targets and Gaussian mixtures) establishing $L^2$-integrability of the score and its relation to a uniform bound like A4, see \cite{sun23}. In this paper we adopt A4 as a standing condition for our noisy zeroth-order analysis.

Under the above assumptions, we first state a convergence result for LMC-SPSA in the noisy setting with fixed step size and fixed perturbation. In this case, the bound has a dimension dependence according to \(O(p^4)\), which will be sharpened later under a refined analysis.

\subsection{Bounded Wasserstein Distance with Constant Stepsize}
\begin{theorem}[Convergence of Noisy LMC-SPSA with Constant Stepsize]\label{thm:constant_stepsize}
Let $f:\mathbb{R}^p \to \mathbb{R}$ satisfy assumptions A1--A5. Consider the LMC-SPSA algorithm with constant stepsize $h_k \equiv h$ and SPSA perturbation magnitude $c_k \equiv c$. Suppose $h$ is chosen such that $0 < h \le 2/({m+M})$. Then for all $K \ge 1$, the Wasserstein-2 distance between $\nu_K$ (the law of $X_K$) and the target $\pi$ is bounded by 
\begin{equation}\label{eq:W2_bound0}
\begin{split}
W_2(\nu_{K+1},\pi)
\le\;&
(1-mh)^K\,W_2(\nu_0,\pi)
+\frac{7\sqrt2\,M}{6m}\sqrt{hp}
+\frac{M_3}{6m}c^2p^4 \\
&+\sqrt{\frac{(p-1)h}{m}}\,M_1
+\sqrt{\frac{hp}{2m}}\frac{\delta}{c}.
\end{split}
\end{equation}
\end{theorem}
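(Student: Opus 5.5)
The bound will follow from the inexact-gradient Langevin framework of Dalalyan and Karagulyan \cite{dalalyan19}, which the paper cites as its template. We write the SPSA surrogate as
\[
\widetilde G_k=\nabla f(X_k)+b_k+\zeta_k ,
\]
where $b_k:=\mathbb E[\widetilde G_k-\nabla f(X_k)\mid X_k]$ is the conditional bias and $\zeta_k$ is a conditionally mean-zero fluctuation. The fluctuation $\zeta_k$ has two sources: the Rademacher sign randomness, and the paired noise $(\varepsilon_k^{(+)}-\varepsilon_k^{(-)})\Delta_k/(2c)$. The main step is a one-step synchronous coupling. We run an exact Langevin diffusion $L_t$ with $L_0\sim\pi$, optimally coupled to $X_k$ at time $kh$, and drive it with the same Brownian increment that produces $\sqrt{2h}\,\xi_k$.

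Over one step, the error $X_{k+1}-L_{(k+1)h}$ splits into four pieces.
\begin{itemize}
\item The exact ULA map $x\mapsto x-h\nabla f(x)$. Under A1--A2 and $h\le 2/(m+M)$, this map is a contraction with factor $1-mh$.
\item The discretization error of the diffusion over $[kh,(k+1)h]$. The standard Dalalyan--Karagulyan estimate bounds its $L^2$ norm by roughly $M h\sqrt{hp}\,(\tfrac{7\sqrt2}{6})$, uniformly in $k$ under stationarity of $L$.
\item The bias term $h b_k$.
\item The mean-zero term $h\zeta_k$.
\end{itemize}
Call the last two contributions $\sigma$ (for $\zeta_k$) and $\beta$ (for $b_k$). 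This yields the recursion
\[
W_k\le(1-mh)W_{k-1}+1.65Mh\sqrt{hp}\cdot(\text{const})+h\beta+\ldots
\]
Here $h\beta$ enters linearly, while the mean-zero term enters through $\sqrt{W^2+h^2\sigma^2}$. Because $\zeta_k$ is conditionally centered given the past and the current coupling, and independent of $\xi_k$ (the last clause of A5), its cross term vanishes. Theorem 4 of \cite{dalalyan19} then unrolls this recursion to
\[
(1-mh)^K W_0+\frac{7\sqrt2 M}{6m}\sqrt{hp}+\frac{\beta}{m}+\frac{\sigma\sqrt{h}}{\sqrt{m}}\cdot(\text{const}).
\]
This unrolling is where the factors $1/m$ and $\sqrt{h/m}$ in the statement come from.

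It remains to compute $\beta$ and $\sigma$ from the Taylor expansion displayed before Algorithm 1.

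\emph{Bias.} The even-order terms cancel in the symmetric difference, and the noise mean vanishes by A5. Lemma 1 of \cite{spall92} with $\Delta_k^{-1}=\Delta_k$ gives $\mathbb E[\Delta_k\Delta_k^\top\nabla f]=\nabla f$. The remaining bias is the third-order remainder
\[
\frac{c^2}{12}\bigl(\nabla^3f^{(+)}+\nabla^3f^{(-)}\bigr)\otimes\Delta^{\otimes3}\,\Delta .
\]
Bounding each tensor entry by $M_3$ (A3), each scalar contraction is at most $M_3p^3$. Taking the Euclidean norm over $p$ coordinates costs at most another factor $p$, which gives $\|b_k\|\le \tfrac{M_3}{6}c^2p^4$. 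This crude counting is precisely the $p^4$ that the later section refines.

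\emph{Fluctuation.} The sign part is $(\Delta_k\Delta_k^\top-I)\nabla f(X_k)$, and $\mathbb E\|(\Delta\Delta^\top-I)g\|^2=(p-1)\|g\|^2$. Controlling this through A4 gives the term $\sqrt{(p-1)h/m}\,M_1$. Strictly, A4 bounds only a first moment, so at this point I will pass the norm outside, in the Dalalyan--Karagulyan $L^2$ sense, and use A4 as stated. The noise part has conditional second moment
\[
\mathbb E\bigl[(\varepsilon^+-\varepsilon^-)^2\bigr]\,\frac{\|\Delta\|^2}{4c^2}\le\frac{2\delta^2 p}{4c^2},
\]
which produces $\sqrt{hp/(2m)}\,\delta/c$. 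The Taylor-remainder part of $\zeta$ can be absorbed into the bias constant.

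\textbf{Main obstacle.} The hard step is justifying that the stochastic part really enters as $\sqrt{h/m}\,\sigma$ and not linearly as $\sigma/m$. This needs the martingale-difference structure: the cross term between the contracted error and $h\zeta_k$ must vanish. That in turn requires $\zeta_k$ to be conditionally centered given $(\mathcal F_k, L_{kh})$, which depends on the conditional independence stated in A5. The second delicate point is reconciling A4's first-moment bound with the second moment needed for the sign-noise variance. I would handle this by stating the variance bound as $\sigma\le\sqrt{p-1}\,M_1$, in line with how \cite{sun23} uses A4.
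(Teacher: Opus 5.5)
Your proposal is essentially the paper's own proof. It uses the same synchronous coupling with a stationary Langevin diffusion, the contraction (factor $1-mh$) and discretization ($\tfrac{7\sqrt2}{6}M(h^3p)^{1/2}$) estimates of \cite{dalalyan19}, and the same split of $\widetilde G_k-\nabla f(X_k)$ into three pieces: the sign term $(\Delta_k\Delta_k^\top-I)\nabla f(X_k)$ with variance $(p-1)\|\nabla f(X_k)\|^2$, the third-order remainder $R_k$ crudely bounded by $\tfrac{M_3}{6}c^2p^4$ and carried in the linear part, and the paired-noise term with second moment at most $p\delta^2/(2c^2)$. It then applies the recursion lemma (Lemma~1 of \cite{sun23}, which is what your appeal to \cite{dalalyan19} amounts to) and the simplification $B^2/(C+\sqrt{A}B)\le B/\sqrt{A}$.

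The two caveats you flag are taken for granted in the paper in exactly the same way: using A4 as an $L^2$ bound on $\nabla f(X_k)$, and orthogonality of the centered part. Strictly, a remainder carried wholly in the linear part is not orthogonal to the sign term, since both depend on $\Delta_k$. The clean fix is to keep only $\mathbb E[R_k\mid X_k]$ in the linear part and move $R_k-\mathbb E[R_k\mid X_k]$ into the centered part. The slack in the crude count absorbs this cost for $p\ge2$, because the pointwise bound is really $\tfrac{M_3}{6}c^2p^{7/2}$; for $p=1$ the sign term vanishes.
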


\begin{proof}
\begingroup
\setlength{\abovedisplayskip}{4pt}
\setlength{\belowdisplayskip}{4pt}
\setlength{\jot}{2pt}

\noindent
Fix an iteration $k\ge 0$ and let $h:=h_k$.
Let $X_k\sim \nu_k$ be the current iterate and let $L_0\sim \pi$ be a random vector
defined on the same probability space as $X_k$ (under an arbitrary coupling of $(\nu_k,\pi)$).
Let $\{L_s\}_{0\le s\le h}$ be the Langevin diffusion starting from $L_0$, i.e.,
\begin{equation}\label{eq:langevin_diffusion_integral}
L_h
=
L_0-\int_0^h \nabla f(L_s)\,\mathrm{d}s+\sqrt{2}\,W_h,\qquad h>0,
\end{equation}
where $W_h$ is a $p$-dimensional Brownian increment with $W_h\sim \mathcal{N}(0,hI_p)$.
Equivalently, we may write $W_h=\sqrt{h}\,\xi_k$ with $\xi_k\sim\mathcal{N}(0,I_p)$.

\noindent Let
\[
D_{0k}:=L_0-X_k,\qquad D_{h,k+1}:=L_h-X_{k+1}.
\]

\noindent Define the local deviation
\begin{equation}\label{eq:D-increment}
D_{h,k+1}-D_{0k} \;=\; L_h - X_{k+1} - L_0 + X_k ,
\end{equation}
where $L_h$ is the Langevin diffusion at time $h$.
\noindent Since the only material difference between using $h_k$ and a constant $h$
is whether the Wasserstein upper bound vanishes, in what follows we write the one-step
comparison using $h$ (this does not affect the argument materially). Then
\begin{align}
D_{h,k+1}-D_{0k}
&= -\int_{0}^{h}\!\nabla f(L_s)\,ds \;+\; \sqrt{2h}\,\xi_k
   \;+\; h\,\tilde G(X_k) \;-\; \sqrt{2h_k}\,\xi_k \notag\\
&= -\int_{0}^{h}\!\nabla f(L_s)\,ds \;+\; h\,\tilde G(X_k). \label{eq:Wh-compact}
\end{align}

\noindent Hence
\begin{align}
D_{h,k+1}
&= D_{0k}
 - h\!\left[\nabla f(L_0)-\nabla f(X_k)\right]
 + h\!\left(\tilde G(X_k)-\nabla f(X_k)\right) \notag\\
&\quad - \int_{0}^{h}\!\big(\nabla f(L_s)-\nabla f(L_0)\big)\,ds .
\label{eq:D-recursion}
\end{align}

\noindent For SPSA, with 
$c>0$,
\begin{equation}\label{eq:Gtilde-def}
\tilde G(X_k)
= \frac{\tilde f(X_k + c\Delta_k)-\tilde f(X_k - c\Delta_k)}{2c}\;\Delta_k^{-1}.
\end{equation}
Using $\tilde f(\cdot)=f(\cdot)+\varepsilon(\cdot)$,
\begin{equation}
\tilde G(X_k)
= G_k \;+\; \frac{\varepsilon_k^{(+)}-\varepsilon_k^{(-)}}{2c}\,\Delta_k^{-1}.
\end{equation}
where $G_k$ is the gradient approximation under noise-free settings.
Therefore, by (17) in \cite{sun23}, where \(\Delta_k^{-1}:=(\Delta_{k1}^{-1},\ldots,\Delta_{kp}^{-1})^\top\), and
\(\nabla^3 f(x)\) is understood as the third-order derivative tensor written in
vectorized form, so that for any \(u,v,w\in\mathbb{R}^p\),
\[
\nabla^3 f(x)\,(u\otimes v\otimes w)
:=
\sum_{i_1,i_2,i_3=1}^p
\frac{\partial^3 f(x)}
{\partial x_{i_1}\partial x_{i_2}\partial x_{i_3}}
\,u_{i_1}v_{i_2}w_{i_3}.
\]
By the third-order Taylor expansion with Lagrange remainder, there exist points
\[
X_k^+ \in [\,X_k,\; X_k+c\Delta_k\,],
\qquad
X_k^- \in [\,X_k-c\Delta_k,\; X_k\,],
\]
such that the third-derivative remainder terms are evaluated at \(X_k^+\) and \(X_k^-\). Then
\begin{align}
\tilde G(X_k)-\nabla f(X_k)
&=
\underbrace{
\bigl(\Delta_k^{-1}\Delta_k^{\!\top}-I\bigr)\nabla f(X_k)
}_{\text{SP projection error}}
\label{eq:decomp-1}
\\[-0.25em]
&\quad+
\underbrace{
\frac{c^2}{12}
\begin{aligned}[t]
&\Bigl(
\nabla^3 f(X_k^+)
\bigl(\Delta_k\otimes\Delta_k\otimes\Delta_k\bigr)
\\[-0.15em]
&\qquad+
\nabla^3 f(X_k^-)
\bigl(\Delta_k\otimes\Delta_k\otimes\Delta_k\bigr)
\Bigr)\Delta_k^{-1}
\end{aligned}
}_{\text{third-derivative remainder}}
\label{eq:decomp-2}
\\[-0.25em]
&\quad+
\underbrace{
\frac{\varepsilon_k^{(+)}-\varepsilon_k^{(-)}}{2c}
\,\Delta_k^{-1}
}_{\text{noise term}}.
\label{eq:decomp-3}
\end{align}
By (A4), the independence of $\Delta_k$ and $X_k$, and the
fact that $\Delta_k\in\{\pm1\}^p$ has i.i.d.\ Rademacher entries, we
have $\Delta_k^{-1}=\Delta_k$ almost surely and
\[
\mathbb{E}_{\Delta_k}
\left[\Delta_k\Delta_k^\top\right]
=I_p.
\]
Consequently, conditional on $X_k$,
\begin{equation}
\mathbb{E}_{\Delta_k}
\left[
  \left(
    \Delta_k^{-1}\Delta_k^\top-I_p
  \right)\nabla f(X_k)
  \,\middle|\,
  X_k
\right]
=0.
\label{eq:sp-projection-centered}
\end{equation}
Moreover, since
\[
\Delta_k^\top\Delta_k=p
\qquad\text{almost surely},
\]
we have
\begin{align}
\left(
  \Delta_k\Delta_k^\top-I_p
\right)^2
&=
\Delta_k\Delta_k^\top
\Delta_k\Delta_k^\top
-2\Delta_k\Delta_k^\top+I_p
\nonumber\\
&=
\left(
  \Delta_k^\top\Delta_k
\right)
\Delta_k\Delta_k^\top
-2\Delta_k\Delta_k^\top+I_p
\nonumber\\
&=
(p-2)\Delta_k\Delta_k^\top+I_p.
\label{eq:sp-projection-square}
\end{align}
Taking expectation with respect to $\Delta_k$ gives
\begin{align}
\mathbb{E}_{\Delta_k}
\left[
  \left(
    \Delta_k\Delta_k^\top-I_p
  \right)^2
\right]
&=
(p-2)
\mathbb{E}_{\Delta_k}
\left[
  \Delta_k\Delta_k^\top
\right]
+I_p
\nonumber\\
&=
(p-1)I_p.
\label{eq:sp-projection-matrix-identity}
\end{align}
Therefore, conditional on $X_k$,
\begin{align}
&\mathbb{E}_{\Delta_k}
\left[
  \left\|
    \left(
      \Delta_k^{-1}\Delta_k^\top-I_p
    \right)\nabla f(X_k)
  \right\|^2
  \,\middle|\,
  X_k
\right]
\nonumber\\
&\quad=
\nabla f(X_k)^\top
\mathbb{E}_{\Delta_k}
\left[
  \left(
    \Delta_k\Delta_k^\top-I_p
  \right)^2
\right]
\nabla f(X_k)
\nonumber\\
&\quad=
(p-1)\left\|\nabla f(X_k)\right\|^2.
\label{eq:exact-sp-projection-variance}
\end{align}
Taking expectation with respect to $X_k$ and applying Assumption A4
yields
\begin{align}
\left\|
  \left(
    \Delta_k^{-1}\Delta_k^\top-I_p
  \right)\nabla f(X_k)
\right\|_{L_2}^2
&=
(p-1)
\mathbb{E}
\left[
  \left\|\nabla f(X_k)\right\|^2
\right]
\nonumber\\
&\le
(p-1)M_1^2.
\end{align}
Hence,
\begin{equation}
\boxed{
\left\|
  \left(
    \Delta_k^{-1}\Delta_k^\top-I_p
  \right)\nabla f(X_k)
\right\|_{L_2}
\le
\sqrt{p-1}\,M_1.
}
\label{eq:sharpened-sp-projection-bound}
\end{equation}

\noindent By (A4) and $\Delta_k\in\{\pm1\}^p$ (i.i.d. Rademacher),
\begin{align}
\big\|(\Delta^{-1}\Delta^{\!\top}-I)\,\nabla f(X)\big\|_{L_2}
&\le \big\|\Delta^{-1}\Delta^{\!\top}-I\big\|_F\;\|\nabla f(X)\|_{L_2} \notag\\
&\le \sqrt{p(p-1)}\,M_1. \label{eq:bound-proj}
\end{align}
By (19) in \cite{sun23},
\begin{equation}\label{eq:bound-third}
\left\|
\left(
\nabla^3 f(X_k^+)\bigl(\Delta_k\otimes\Delta_k\otimes\Delta_k\bigr)
+
\nabla^3 f(X_k^-)\bigl(\Delta_k\otimes\Delta_k\otimes\Delta_k\bigr)
\right)\Delta_k^{-1}
\right\|
\;\le\; 2\,M_3\,p^{4}.
\end{equation}

For the third (noise) term in \eqref{eq:decomp-3},

\begin{align}
\mathbb{E}\!\left[\frac{\varepsilon_k^{(+)}-\varepsilon_k^{(-)}}{2c}\,\Delta_k^{-1}\,\middle|\,X_k\right] &= 0, \label{eq:noise-mean}\\
\mathbb{E}\!\left[\left\|\frac{\varepsilon_k^{(+)}-\varepsilon_k^{(-)}}{2c}\,\Delta_k^{-1}\right\|^{2}\right] 
&\le \frac{\delta^{2}}{2c^{2}}\,p. \label{eq:noise-var}
\end{align}

\noindent Thus,
\begin{align}
\|D_{h,k+1}\|_{L_2}^2
&\le
\Biggl(
\left\|
D_{0k}
-
h\bigl(\nabla f(X_{k}+D_{0k})-\nabla f(X_{k})\bigr)
\right\|_{L_2}
+\frac{1}{3}h c^2 M_3 p^4
\notag\\
&\qquad\qquad
+\left\|
\int_0^{h}\bigl(\nabla f(L_t)-\nabla f(L_0)\bigr)\,dt
\right\|_{L_2}
\Biggr)^2
+(p-1)h^2M_1^2
+\frac{h^2 p\delta^2}{2c^2}.
\label{eq:Dkh_bound}
\end{align}

\noindent By Lemma~2 in \cite{dalalyan19},
\begin{equation}
\left\|
D_{0k}
-
h\bigl(\nabla f(X_{k}+D_{0k})-\nabla f(X_{k})\bigr)
\right\|_{L_2}
\le
\rho_k \|D_{0k}\|_{L_2}.
\label{eq:lemma2_bound}
\end{equation}

\noindent By Lemma~4 in \cite{dalalyan19},
\begin{equation}
\left\|
\int_0^{h}\bigl(\nabla f(L_t)-\nabla f(L_0)\bigr)\,dt
\right\|_{L_2}
\le
\frac12\bigl(h^4M^3p\bigr)^{1/2}
+\frac23(2h^3p)^{1/2}M
\le
\frac{7\sqrt2}{6}(h^3p)^{1/2}M.
\label{eq:lemma4_bound}
\end{equation}

\noindent Moreover, since \(D_{0k}=L_0-X_k\) and \(D_{h,k+1}=L_{h}-X_{k+1}\) under the chosen coupling, the definition of the \(2\)-Wasserstein distance gives
\begin{equation}
W_2^2(\nu_k,\pi)\le \|D_{0k}\|_{L_2}^2,
\qquad
W_2^2(\nu_{k+1},\pi)\le \|D_{h,k+1}\|_{L_2}^2.
\label{eq:w2_coupling}
\end{equation}

\noindent If the coupling for \(D_{0k}\) is chosen optimally, then \(\|D_{0k}\|_{L_2}=W_2(\nu_k,\pi)\). Therefore, combining \eqref{eq:Dkh_bound}, \eqref{eq:lemma2_bound}, \eqref{eq:lemma4_bound}, and \eqref{eq:w2_coupling}, we obtain
\begin{equation}
W_2^2(\nu_{k+1},\pi)
\le
\left\{
\rho_k W_2(\nu_k,\pi)
+\frac{7\sqrt2}{6}(h^3p)^{1/2}M
+\frac{1}{3}h c^2 M_3 p^4
\right\}^2
+(p-1)h^2M_1^2
+\frac{h^2 p\delta^2}{2c^2}.
\label{eq:w2_onestep}
\end{equation}

\begin{align}
\intertext{Let \(A\), \(B_{\mathrm{nf}}\), \(B_{\mathrm{new}}\), and \(C\) be defined as follows. 
Here \(B_{\mathrm{nf}}\) denotes the \(B\)-term in the noise-free result of Lemma 1 \cite{sun23}, while \(B_{\mathrm{new}}\) is its noisy form:}
A \,&=\, m\,h, \\
B_{\mathrm{nf}}^{\,2} \,&=\, p(p-1)\,h^{2} M_1^{2}, \\
B_{\mathrm{new}}^{\,2} \,&=\, (p-1)\,h^{2} M_1^{2} \;+\; \frac{h^2\,p\,\delta^{2}}{2c^{2}}
\;, \\
C \,&=\, \tfrac{7\sqrt{2}}{6}M\,(h^{3}p)^{1/2} \;+\; \tfrac{1}{6}c^{2} h M_3 p^{2}.
\label{eq:defABC}
\end{align}

\begin{align}
\intertext{Then by \eqref{eq:w2_onestep} and Lemma~1 \cite{sun23}, the one-step inequality yields}
W_2\!\left(\nu_{k+1},\pi\right)
&\le
(1-mh)^{k}\,W_2\!\left(\nu_0,\pi\right)
+ \frac{C}{A}
+ \frac{B_{\text{new}}^{\,2}}{\,C+\sqrt{A\,B_{\text{new}}^{\,2}}\,}.
\label{eq:W2-final-short}
\end{align}

\endgroup

\noindent Since C > 0, we have
\begin{align}
\frac{B_{\mathrm{new}}^{\,2}}{C+\sqrt{A B_{\mathrm{new}}^{\,2}}}
&\le
\frac{B_{\mathrm{new}}^{\,2}}{\sqrt{A B_{\mathrm{new}}^{\,2}}}
=
\frac{B_{\mathrm{new}}}{\sqrt A}
=
\sqrt{\frac{(p-1)}{m}\,h M_1^2+\frac{hp\delta^2}{2m c^2}}
\notag\\
&\le
\sqrt{\frac{(p-1)}{m}}\,M_1\sqrt{h}
+
\sqrt{\frac{hp}{2m}}\frac{\delta}{c}.
\label{eq:Bnew_simplify}
\end{align}

\noindent Then, \eqref{eq:W2_bound0} holds.

\noindent \textit{Verification that the noisy upper bound dominates the noise-free upper bound.}
Recall that
\[
B_{\mathrm{new}}^{\,2}=B^{2}+Z,
\qquad
Z:=\frac{h p\delta^{2}}{2c^{2}}\ge 0.
\]
Since the only difference between the noise-free and noisy upper bounds is the term
\[
\frac{B^{2}}{C+\sqrt{A B^{2}}}
\qquad\text{versus}\qquad
\frac{B_{\mathrm{new}}^{\,2}}{C+\sqrt{A B_{\mathrm{new}}^{\,2}}}.
\]
Define
\[
\phi(x):=\frac{x}{C+\sqrt{Ax}}, \qquad x\ge 0.
\]
Since \(A\ge 0\) and \(C\ge 0\), \(\phi\) is increasing on \([0,\infty)\). Indeed, for \(x>0\),
\[
\phi'(x)
=
\frac{(C+\sqrt{Ax})-x\cdot \dfrac{\sqrt{A}}{2\sqrt{x}}}{(C+\sqrt{Ax})^{2}}
=
\frac{C+\tfrac12\sqrt{Ax}}{(C+\sqrt{Ax})^{2}}
>0.
\]
Hence, by continuity, \(\phi\) is nondecreasing on \([0,\infty)\). Therefore,
\[
\frac{B_{\mathrm{new}}^{\,2}}{C+\sqrt{A B_{\mathrm{new}}^{\,2}}}
=
\phi(B^{2}+Z)
\ge
\phi(B^{2})
=
\frac{B^{2}}{C+\sqrt{A B^{2}}},
\]
since \(Z\ge 0\).

All remaining terms in the two upper bounds are identical. Therefore, the upper bound for
\(W_2(\nu_k,\pi)\) in the noisy setting is larger than the corresponding upper bound in the
noise-free setting. This completes the verification.
\end{proof}

\subsection{Diminishing Stepsize Schedule for Asymptotic Convergence}
A key tension is the following: if \(h_k\) decays too fast, then the contractive factor \(1-mh_k\) approaches \(1\), and the resulting product \(\prod_{j=0}^{k-1}(1-mh_j)\) may fail to decay in the Wasserstein recursion; if $h_k$ decays too slowly (or $c_k$ are not chosen compatibly), the remainders in the Wasserstein distance upper bound may do not vanish. We therefore separate the
analysis into two parts. We first study the decay of the contractive term under a shrinking-step-size schedule in Theorem 2. Next, Theorem 5 imposes conditions on
\(h_k\) and \(c_k\) to ensure that the remaining terms also vanish, yielding a vanishing
Wasserstein upper bound. Finally, Theorem 6 specifies choices of \(h_k\) and \(c_k\) that
not only guarantee convergence of the Wasserstein distance upper bound, but also achieve the optimal
convergence rate.

\begin{theorem}[Vanishing of the contractive term under shrinking step sizes]
Let \(f:\mathbb{R}^p\to\mathbb{R}\) satisfy Assumptions A1--A5, and let
\[
h_k=\min\left\{\frac{2}{m+M},\frac{1}{(k+1)^\alpha}\right\},\qquad k\ge 0,
\]
for some \(\alpha\in(0,1)\). Then the contractive term in the Wasserstein upper bound,
\[
\left(\prod_{j=0}^{k-1}(1-mh_j)\right)W_2(\nu_0,\pi)\to 0
\qquad\text{as }k\to\infty
\]
\end{theorem}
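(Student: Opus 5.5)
The plan is to show directly that $\prod_{j=0}^{k-1}(1-mh_j)\to 0$. Since $W_2(\nu_0,\pi)$ is a fixed finite number (finite second moments of $\nu_0$ and of $\pi$, the latter by A1), this is enough.

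First I will check that every factor lies in $[0,1)$. Because $h_j\le 2/(m+M)$ and $m\le M$ (A1 and A2 together force this), we have
\[
mh_j\le \frac{2m}{m+M}\le 1 ,
\]
so $0\le 1-mh_j<1$ for all $j$. If some factor equals zero, the product is eventually zero and there is nothing to prove. Otherwise I apply $\log(1-x)\le -x$ for $x\in[0,1)$, which gives
\[
\prod_{j=0}^{k-1}(1-mh_j)\le \exp\Bigl(-m\sum_{j=0}^{k-1}h_j\Bigr).
\]

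The task therefore reduces to showing $\sum_j h_j=\infty$. Let $j_0$ be the first index with $(j_0+1)^{-\alpha}\le 2/(m+M)$; it exists since $(j+1)^{-\alpha}\to0$. For $j\ge j_0$ we have $h_j=(j+1)^{-\alpha}$, and the sequence is monotone, so the minimum switches branches only once. Comparing with an integral,
\[
\sum_{j=j_0}^{k-1}(j+1)^{-\alpha}\ge \int_{j_0+1}^{k+1}t^{-\alpha}\,dt=\frac{(k+1)^{1-\alpha}-(j_0+1)^{1-\alpha}}{1-\alpha},
\]
and this tends to infinity because $\alpha<1$.

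Combining the two steps, the exponential bound goes to zero. In fact it gives the explicit stretched-exponential rate $\exp\bigl(-\tfrac{m}{1-\alpha}(k+1)^{1-\alpha}+\mathrm{const}\bigr)$, which I would record for later use in the rate theorems.

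There is no real obstacle in this argument. The only point that needs care is the $k$-independent initial stretch where $h_j=2/(m+M)$. It contributes only a bounded factor, at most $1$, so discarding it from the lower bound on the sum is harmless. The divergent tail alone drives the product to zero.
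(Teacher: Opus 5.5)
Your proposal is correct and follows the paper's proof essentially step for step: you use $m\le M$ to place every factor in $[0,1]$, bound the product by $\exp\bigl(-m\sum_{j}h_j\bigr)$, and get divergence of $\sum_j h_j$ because eventually $h_j=(j+1)^{-\alpha}$ with $\alpha<1$. The only differences are cosmetic: the paper proves $m\le M$ explicitly, whereas you assert it, and your integral comparison makes the divergence (and a stretched-exponential rate) explicit where the paper simply cites divergence of the series.
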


\begin{proof}
We first show that A1 and A2 imply \(m\le M\). By A1, for any \(x,y\in\mathbb{R}^p\),
\[
\begin{aligned}
f(x)-f(y)-\nabla f(y)^\top(x-y)
&\ge \frac{m}{2}\|x-y\|_2^2,
\\
f(y)-f(x)-\nabla f(x)^\top(y-x)
&\ge \frac{m}{2}\|x-y\|_2^2.
\end{aligned}
\]
Adding these two inequalities gives
\[
(\nabla f(x)-\nabla f(y))^\top(x-y)\ge m\|x-y\|_2^2.
\]
On the other hand, by A2 and Cauchy--Schwarz,
\[
(\nabla f(x)-\nabla f(y))^\top(x-y)
\le \|\nabla f(x)-\nabla f(y)\|_2\,\|x-y\|_2
\le M\|x-y\|_2^2.
\]
Hence \(m\|x-y\|_2^2\le M\|x-y\|_2^2\) for all \(x\neq y\), and therefore \(m\le M\).

\noindent Thus \(2/(m+M)\le 1/m\), so \(0\le mh_k\le 1\) and hence \(0\le 1-mh_k\le 1\) for all \(k\ge 0\). Using the elementary inequality \(1-x\le \exp(-x)\) for \(x\ge 0\), we obtain
\[
\prod_{j=0}^{k-1}(1-mh_j)\le \exp\!\left(-m\sum_{j=0}^{k-1}h_j\right).
\]

\noindent Since \((k+1)^{-\alpha}\to 0\), there exists \(k_0\) such that \(h_k=(k+1)^{-\alpha}\) for all \(k\ge k_0\). Therefore,
\[
\sum_{j=0}^{k-1}h_j \ge \sum_{j=k_0}^{k-1}\frac{1}{(j+1)^\alpha}.
\]
Because \(\alpha\in(0,1)\), the series \(\sum_{j=0}^\infty (j+1)^{-\alpha}\) diverges, and thus \(\sum_{j=0}^{k-1}h_j\to\infty\). It follows that
\[
\exp\!\left(-m\sum_{j=0}^{k-1}h_j\right)\to 0,
\]
so
\[
\prod_{j=0}^{k-1}(1-mh_j)\to 0.
\]
Multiplying by the constant \(W_2(\nu_0,\pi)\) yields
\[
\left(\prod_{j=0}^{k-1}(1-mh_j)\right)W_2(\nu_0,\pi)\to 0.
\] \text {This solves future work in \cite{sun23}.}
\end{proof}

\subsection{Improved Dimension-Dependence: $O(p^2)$ Error Bound}
We now sharpen the dimension dependence in the \(W_2\) error bound for LMC-SPSA. Theorem~1 contains a third-derivative remainder term of order \(c_k^2 p^4\). By refining the treatment of this term, we improve the dominant dimension dependence from \(p^4\) to \(p^2\), leading to a tighter Wasserstein error bound. This sharper dependence is also supported by the numerical results in Section~6.

\begin{theorem}[Sharpened error bound, \(O(p^2)\)]\label{thm:improved_bound}
Under the same setting as Theorem~\ref{thm:constant_stepsize}, with constant or shrinking \(h_k\), the \(W_2\) error admits the sharper bound
\begin{equation}\label{eq:W2_bound_clean}
\begin{split}
W_2(\nu_{K+1},\pi)
\le\;&
\left(\prod_{j=0}^{k}(1-mh_j)\right)W_2(\nu_0,\pi)
+\frac{7\sqrt2\,M}{6m}\sqrt{h_kp}
+\frac{M_3}{6m}c_k^2p^2 \\
&+\sqrt{\frac{(p-1)}{m}}\,M_1\sqrt{h_k}
+\sqrt{\frac{hp}{2m}}\frac{\delta}{c_k}.
\end{split}
\end{equation}
\end{theorem}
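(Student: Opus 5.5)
The plan is to rerun the one-step coupling argument from the proof of Theorem~\ref{thm:constant_stepsize} unchanged, replacing only the estimate \eqref{eq:bound-third} for the third-derivative remainder. Concretely, I would again couple $X_k\sim\nu_k$ with $L_0\sim\pi$ optimally, write $D_{h,k+1}$ via \eqref{eq:D-recursion}, and split $\tilde G(X_k)-\nabla f(X_k)$ into the SP projection error, the third-derivative remainder, and the noise term, as in \eqref{eq:decomp-1}--\eqref{eq:decomp-3}. The two mean-zero pieces will be handled as before. The projection error uses the exact variance identity \eqref{eq:exact-sp-projection-variance}, which gives $\sqrt{p-1}\,M_1$. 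The noise term uses \eqref{eq:noise-mean}--\eqref{eq:noise-var}, which gives $\sqrt{p}\,\delta/(\sqrt2 c_k)$. Both enter the squared $L_2$ norm additively, since their conditional means vanish given $X_k$ and the deterministic-in-$\Delta_k$ part. The contraction (Lemma~2 of \cite{dalalyan19}) and the diffusion discretization term (Lemma~4 of \cite{dalalyan19}) are likewise reused verbatim with $h=h_k$.

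The key new step is a sharper bound on
\[
R_k:=\Bigl(\nabla^3 f(X_k^+)(\Delta_k\otimes\Delta_k\otimes\Delta_k)+\nabla^3 f(X_k^-)(\Delta_k\otimes\Delta_k\otimes\Delta_k)\Bigr)\Delta_k^{-1}.
\]
The scalar $\nabla^3 f(X_k^\pm)(\Delta_k^{\otimes 3})$ is a sum of $p^3$ terms, each of size at most $M_3$, because $|\Delta_{ki}|=1$. Under A3 this gives $|\nabla^3 f(X_k^\pm)(\Delta_k^{\otimes3})|\le M_3p^3$. The vector $\Delta_k^{-1}$ has Euclidean norm exactly $\sqrt p$, so $\|R_k\|\le 2M_3p^{7/2}$; this already beats $p^4$ but is not enough. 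To reach $p^2$ I would instead not bound the remainder pathwise. I would first take the conditional expectation over the symmetric Rademacher $\Delta_k$, exploiting that odd-moment index patterns cancel as in Lemma~1 of \cite{spall92}. Only index triples in which the indices pair up with the outer $\Delta_{k\ell}^{-1}$ survive, namely $\mathbb E[\Delta_{ki}\Delta_{kj}\Delta_{kl}\Delta_{k\ell}]\neq0$. This leaves $O(p)$ nonvanishing terms per coordinate $\ell$, hence $O(p)$ per component and a Euclidean norm of order $M_3p^{3/2}$. The ideal would be to bound the remainder's contribution through its $L_2$ norm, split into a mean part of order $p^{3/2}$ and a fluctuation part. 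If that fluctuation cannot be controlled cleanly, the fallback is a more conservative count: bound the coordinatewise contributions by $M_3p^2$ after the symmetry reduction and use $\|\cdot\|$ of the conditional mean. Either way the goal is $\frac{c_k^2}{12}\|R_k\|_{L_2}\le \frac{c_k^2}{6}M_3p^2$, which is exactly the term $C=\tfrac{7\sqrt2}{6}M(h^3p)^{1/2}+\tfrac16 c^2hM_3p^2$ already recorded in \eqref{eq:defABC}.

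With that estimate in place, the one-step inequality \eqref{eq:w2_onestep} holds with $p^4$ replaced by $p^2$ and with $h,c$ replaced by $h_k,c_k$. I would then iterate it using Lemma~1 of \cite{sun23} as in \eqref{eq:W2-final-short}. For shrinking steps, the contraction factor becomes $\prod_j(1-mh_j)$, which is valid because $h_j\le 2/(m+M)\le1/m$, as shown in the proof of Theorem~2. The stationary remainders $C/A$ and $B_{\mathrm{new}}/\sqrt A$ are evaluated at the current $(h_k,c_k)$. This step requires monotonicity of $h_k$ and $c_k$, so that the earlier-iteration error terms are dominated by the current ones, or else one simply states the bound with the largest such values. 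Finally, \eqref{eq:Bnew_simplify} splits $B_{\mathrm{new}}/\sqrt A$ into the $M_1$ and $\delta/c_k$ pieces.

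I expect the main obstacle to be the remainder step. Because $X_k^\pm$ depend on $\Delta_k$, the Rademacher cancellation cannot be applied directly to $\nabla^3 f(X_k^\pm)$. I would handle this by expanding the remainder around $X_k$ and absorbing the dependence through A3. If that proves unworkable, I would use the sum $\nabla^3 f(X_k^+)+\nabla^3 f(X_k^-)$ and a careful index count to justify the $p^2$ order.
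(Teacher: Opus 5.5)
Your proposal has a genuine gap: the one estimate that distinguishes this theorem from Theorem~\ref{thm:constant_stepsize} is never established. You need $\tfrac{c_k^2}{12}\|R_k\|_{L_2}\le\tfrac{c_k^2}{6}M_3p^2$, and the proposal lists routes and fallbacks, but none of them closes.

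\begin{itemize}
\item The Rademacher cancellation needs a tensor that does not depend on $\Delta_k$, but $X_k^\pm$ lie on segments determined by $\Delta_k$. Expanding $\nabla^3 f(X_k^\pm)$ around $X_k$ would need a bound on $\nabla^4 f$, which A1--A5 do not provide. Absorbing $\nabla^3 f(X_k^\pm)-\nabla^3 f(X_k)$ through A3 alone only returns your pathwise $M_3p^{7/2}$ count.
\item Even with the tensor $T$ frozen at $X_k$, the conditional mean is not the quantity that enters. In \eqref{eq:Dkh_bound} the remainder is a non-centred term inside the $L_2$ norm, so its full second moment is needed. Pairing the six Rademacher indices gives $\mathbb E[T(\Delta_k^{\otimes 3})^2]\le 15M_3^2p^3$, which is asymptotically sharp when all entries of $T$ equal $M_3$. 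Hence $\|R_k\|_{L_2}\le 2\sqrt{15}\,M_3p^2$: the right order, but not the constant $2$ behind the coefficient $M_3/(6m)$.
\item Shifting the fluctuation into $B_{\mathrm{new}}^2$ instead creates a cross term with the projection error, proportional to $(p-1)\,\mathbb E[(\Delta_k^\top\nabla f(X_k))\,T(\Delta_k^{\otimes 3})]$, which is nonzero in general. Your claim that the two mean-zero pieces enter additively already tacitly needs this orthogonality to the $\Delta_k$-dependent remainder. The paper's \eqref{eq:Dkh_bound} passes over the same point.
\end{itemize}

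The paper closes this step pathwise, with no cancellation. It uses $|\nabla^3 f(X_k^\pm)(\Delta_k\otimes\Delta_k\otimes\Delta_k)|\le M_3\|\Delta_k\|_2^3=M_3p^{3/2}$ together with $\|\Delta_k^{-1}\|_2=\sqrt p$, so $\|R_k\|\le 2M_3p^2$ for every realization. This makes the $\Delta_k$-dependence of $X_k^\pm$ irrelevant and gives exactly the term $\tfrac16hc^2M_3p^2$ in $C$.

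Your reason for abandoning the pathwise route is nevertheless sound. That inequality requires $M_3$ to bound the trilinear form, $|\nabla^3 f(x)(u\otimes u\otimes u)|\le M_3\|u\|_2^3$. Under the entrywise A3 as written, your $M_3p^3$ is the sharp pathwise bound: the tensor $M_3\,\Delta_k\otimes\Delta_k\otimes\Delta_k$ has all entries of modulus $M_3$ and gives exactly $M_3p^3$. So the fix is to read (or restate) A3 as an operator-norm bound, after which the theorem follows in one line. With the literal A3, neither the paper's step nor your sketch establishes the stated $p^2$ term.

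A smaller point concerns shrinking steps. For nonincreasing $h_k$, the $\sqrt{h_j}$-terms are larger at earlier iterations, so monotonicity works against you. A bound in terms of the current $(h_k,c_k)$ therefore does not follow from Lemma~1 of \cite{sun23}. Your alternative of using the largest values, or the squared recursion of Theorem~\ref{thm:shrinking-noisy}, is the defensible version; the paper's proof does not address this either.
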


\begin{proof}
The only difference from the proof of Theorem~\ref{thm:constant_stepsize} is the bound on the third-derivative remainder term. 
By the third-order Taylor expansion with Lagrange remainder, there exist points
\[
X_k^+ \in [\,X_k,\; X_k+c_k\Delta_k\,],
\qquad
X_k^- \in [\,X_k-c_k\Delta_k,\; X_k\,],
\]

\noindent such that the third-derivative remainder terms are evaluated at \(X_k^+\) and \(X_k^-\).
Instead of using the estimate in \cite{sun23},
\[
\Big\|
\bigl(
\nabla^3 f(X_k^+)\bigl(\Delta_k\otimes\Delta_k\otimes\Delta_k\bigr)
+
\nabla^3 f(X_k^-)\bigl(\Delta_k\otimes\Delta_k\otimes\Delta_k\bigr)
\bigr)\Delta_k^{-1}
\Big\|_{L_2}
\le 2M_3p^4,
\]
we improve the bound to \(O(p^2)\). Since \(\Delta_k\in\{\pm1\}^p\) has i.i.d.\ Rademacher coordinates, we have
\[
\|\Delta_k\|_2=\|\Delta_k^{-1}\|_2=\sqrt p
\qquad \text{a.s.}
\]
Hence by A3, we have
\[
\left|
\nabla^3 f(X_k^\pm)\bigl(\Delta_k\otimes\Delta_k\otimes\Delta_k\bigr)
\right|
\le
M_3\|\Delta_k\|_2^3
=
M_3p^{3/2}.
\]
Therefore,
\begin{align}
&\Big\|
\bigl(
\nabla^3 f(X_k^+)\bigl(\Delta_k\otimes\Delta_k\otimes\Delta_k\bigr)
+
\nabla^3 f(X_k^-)\bigl(\Delta_k\otimes\Delta_k\otimes\Delta_k\bigr)
\bigr)\Delta_k^{-1}
\Big\|_{L_2}
\notag\\
&\le
\Big(
\big|
\nabla^3 f(X_k^+)\bigl(\Delta_k\otimes\Delta_k\otimes\Delta_k\bigr)
\big|
+
\big|
\nabla^3 f(X_k^-)\bigl(\Delta_k\otimes\Delta_k\otimes\Delta_k\bigr)
\big|
\Big)\,\|\Delta_k^{-1}\|_2
\notag\\
&\le
\bigl(M_3p^{3/2}+M_3p^{3/2}\bigr)\sqrt p
=
2M_3p^2.
\label{eq:bound-third-improved}
\end{align}
Replacing the old \(O(p^4)\) estimate by \eqref{eq:bound-third-improved} in the proof of Theorem~\ref{thm:constant_stepsize} yields \eqref{eq:W2_bound_clean}.
\end{proof}

\paragraph{Remark.}
For later sample-complexity and parameter-tuning arguments, we record
the corresponding unsimplified sharpened bound below. This bound is
tighter than the simplified display bound and preserves the structure
needed for the explicit balancing of the error terms:
\begin{equation}
\begin{aligned}
W_2(\nu_{K+1},\pi)
\le{}&
(1-mh)^K W_2(\nu_0,\pi)
+\frac{C}{A}
+\frac{B_{\mathrm{new}}^2}
{C+\sqrt{A B_{\mathrm{new}}^2}},
\end{aligned}
\label{eq:sharpened-constant-bound}
\end{equation}
where
\begin{align}
A
&:=mh,\\
B_{\mathrm{new}}^2
&:=
(p-1)h^2M_1^2
+\frac{h^2p\delta^2}{2c^2},\\
C
&:=
\frac{7\sqrt{2}}{6}M(h^3p)^{1/2}
+\frac{1}{6}hc^2M_3p^2.
\end{align}
The practical meaning of Theorem 3 is that for large p, the error bound is significantly less pessimistic. For instance, when \(p=1000\), the worst-case coefficient changes from order \(10^{12}\) to order \(10^{6}\). While such bounds remain conservative, the improvement is substantial and is consistent with the numerical trends reported in Section~6. We now turn to a comparison with the smoothing-based zeroth-order Langevin framework of Roy et al.~\cite{Roy2022}.

\section{Comparison with smoothing-based ZO-LMC}
\label{sec:comparison_roy}

We compare LMC-SPSA with the smoothing-based zeroth-order
Langevin methods of Roy et al.~\cite{Roy2022}.

\paragraph{Oracle models and fairness of comparison.}
We distinguish two oracle regimes.
\emph{(i) Strong stochastic oracle (SO).} The oracle returns a random function value $F(\theta,\xi)$ such that
$\mathbb{E}[F(\theta,\xi)]=f(\theta)$ and moreover the stochastic gradient is unbiased
$\mathbb{E}[\nabla F(\theta,\xi)]=\nabla f(\theta)$ with bounded second moment/variance.
This setting underlies the main $\varepsilon^{-2}$ complexity guarantees in Roy et al.\ (2022) \cite{Roy2022}.
\emph{(ii) Function-only oracle (FO).} The algorithm only queries noisy function values, without assuming access to
unbiased stochastic gradients of $F$; this is the setting naturally aligned with SPSA-type estimators.

\paragraph{Two-point (CRN) vs.\ one-point (independent noise).}
\cite{Roy2022} analyzes both a two-point estimator that uses common randomness across two function evaluations
and a one-point setting where function noise at different query points is independent.
These regimes yield different oracle complexities in terms of the target accuracy $\varepsilon$.

\paragraph{Discussion.}
Roy et al.~\cite{Roy2022} distinguish between two-point feedback,
where the same random input can be used at both function-evaluation
points, and one-point feedback, where the two evaluations are
corrupted by independent noises. In the two-point regime, their
ZO-LMC method uses a batch of size
$b=\widetilde{\mathcal O}(\max\{1,\sigma^2\}p)$ per iteration and
requires $\widetilde{\mathcal O}(p/\epsilon^2)$ iterations, resulting
in $\widetilde{\mathcal O}(\max\{1,\sigma^2\}p^2/\epsilon^2)$
function evaluations, up to the constant factor of two associated
with each two-point difference.

Under the independent additive-noise model of Section~4 in
\cite{Roy2022}, the required batch size increases to
$b=\widetilde{\mathcal O}(\max\{1,\sigma^2\}p/\epsilon^2)$, and the
resulting oracle complexity becomes
$\widetilde{\mathcal O}(\max\{1,\sigma^2\}p^2/\epsilon^4)$.

By contrast, each LMC-SPSA iteration uses exactly two function
evaluations. Under independent additive noise, for which
$\delta^2=\sigma^2$ in Assumption~A5, Theorem~4 gives
$\widetilde{\mathcal O}(p/\epsilon^2+\sigma^2p^3/\epsilon^3)$ total
function evaluations. Thus, in the noise-dominated regime,
LMC-SPSA improves the accuracy dependence from $\epsilon^{-4}$ to
$\epsilon^{-3}$, although the corresponding bound has a higher
dimension dependence, $p^3$ instead of $p^2$. Consequently, neither
bound uniformly dominates the other over all
$(p,\epsilon,\sigma)$.

Under perfectly paired additive noise, $\delta=0$, and Theorem~4
reduces to $\widetilde{\mathcal O}(p/\epsilon^2)$ total function
evaluations. This stronger result relies on exact cancellation of
the paired noise and should be distinguished from the
independent-noise setting of Section~4 in \cite{Roy2022}.
\begin{theorem}[Explicit parameter choice]
\label{thm:corrected-explicit}
Suppose that the conditions of Theorem~1 hold. Fix
$\epsilon\in(0,1)$, and choose
\begin{align}
c^2
&:=
\frac{3m\epsilon}{2M_3p^2},
\label{eq:explicit-c}\\
h
&:=
\min\left\{
\frac{2}{m+M},
\frac{9m^2\epsilon^2}{392M^2p},
\frac{m\epsilon^2}
{16\left(
(p-1)M_1^2+
\dfrac{\delta^2M_3p^3}{3m\epsilon}
\right)}
\right\},
\label{eq:explicit-h}\\
N
&:=
\left\lceil
\frac{1}{mh}
\log\left(
\max\left\{
1,\frac{4W_2(\nu_0,\pi)}{\epsilon}
\right\}
\right)
\right\rceil.
\label{eq:explicit-N}
\end{align}
Then $W_2(\nu_N,\pi)\le\epsilon$. Since every LMC-SPSA
iteration uses exactly two function evaluations,
$\operatorname{OracleCalls}=2N$. In particular,
\begin{align}
\operatorname{OracleCalls}
&=
\widetilde{\mathcal O}\left(
\frac{p}{\epsilon^2}
+
\frac{\delta^2p^3}{\epsilon^3}
\right),
\label{eq:corrected-complexity-general}\\
\operatorname{OracleCalls}
&=
\widetilde{\mathcal O}\left(
\frac{p}{\epsilon^2}
\right),
\qquad \delta=0,
\label{eq:corrected-complexity-zero}
\end{align}
where the problem-dependent constants $m,M,M_1,M_3$ are
suppressed.
\end{theorem}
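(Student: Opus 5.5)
We will work from the sharpened constant-stepsize bound \eqref{eq:W2_bound_clean} of Theorem~\ref{thm:improved_bound}, specialized to $h_k\equiv h$, $c_k\equiv c$. In that case the contraction product is $(1-mh)^{N}$. We first drop the middle term $B_{\mathrm{new}}^2/(C+\sqrt{AB_{\mathrm{new}}^2})$ down to $B_{\mathrm{new}}/\sqrt A$, exactly as in \eqref{eq:Bnew_simplify}. This gives
\[
W_2(\nu_N,\pi)\le (1-mh)^{N}W_2(\nu_0,\pi)+T_1+T_2+T_3,
\]
where
\[
T_1:=\tfrac{7\sqrt2 M}{6m}\sqrt{hp},\qquad
T_2:=\tfrac{M_3}{6m}c^2p^2,\qquad
T_3:=\sqrt{\tfrac{h}{m}\Bigl((p-1)M_1^2+\tfrac{p\delta^2}{2c^2}\Bigr)}.
\]
We keep $T_3$ in this unsplit square-root form, since the third entry of the minimum in \eqref{eq:explicit-h} is calibrated to it. The plan is to show that each of the four pieces is at most $\epsilon/4$.

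\textbf{Contraction term.} Because $h\le 2/(m+M)$ and $m\le M$ (shown in the proof of Theorem~2), we have $0\le 1-mh\le 1$. Then $(1-mh)^N\le e^{-mhN}$, and the choice \eqref{eq:explicit-N} gives $e^{-mhN}\le \epsilon/(4W_2(\nu_0,\pi))$ whenever that ratio is below one, and the bound is trivial otherwise. So the contraction term is at most $\epsilon/4$.

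\textbf{Discretization and bias terms.} Plugging $h\le 9m^2\epsilon^2/(392M^2p)$ into $T_1$ gives $T_1\le \tfrac{7\sqrt2}{6}\cdot\tfrac{3\epsilon}{\sqrt{392}}=\epsilon/4$, using $\sqrt{392}=14\sqrt2$. Substituting \eqref{eq:explicit-c} into $T_2$ gives exactly $T_2=\epsilon/4$.

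\textbf{Noise/projection term.} This is where \eqref{eq:explicit-c} enters the noise. We have $p\delta^2/(2c^2)=\delta^2M_3p^3/(3m\epsilon)$, so
\[
T_3=\sqrt{\tfrac{h}{m}\Bigl((p-1)M_1^2+\tfrac{\delta^2M_3p^3}{3m\epsilon}\Bigr)},
\]
and the third entry of the minimum yields $T_3\le\sqrt{\epsilon^2/16}=\epsilon/4$. Summing the four bounds gives $W_2(\nu_N,\pi)\le\epsilon$.

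\textbf{Main obstacle: indexing.} The step I expect to need the most care is the indexing of the bound. The theorem is stated for $\nu_{K+1}$ with contraction exponent $K$, while $N$ controls $\nu_N$. I would take $K=N-1$, which gives contraction factor $(1-mh)^{N-1}$. The cleanest remedy is to note that the recursion \eqref{eq:w2_onestep} applied $N$ times actually produces $(1-mh)^N$ (the one-step factor $\rho\le 1-mh$ of Lemma~2 of \cite{dalalyan19}). Alternatively, one can absorb the missing factor of $1/(1-mh)\le$ a constant into the $\widetilde{\mathcal O}$ and the logarithm. I would verify the former by re-running the unrolling of Lemma~1 of \cite{sun23}.

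\textbf{Complexity.} It remains to read off $2N$. Since $N\lesssim (mh)^{-1}\log(4W_2(\nu_0,\pi)/\epsilon)$, $1/h$ is the maximum of the reciprocals of the three entries in \eqref{eq:explicit-h}:
\begin{itemize}
\item the first is $O(1)$;
\item the second is $O(p/\epsilon^2)$;
\item the third is $O\bigl(pM_1^2/\epsilon^2+\delta^2p^3/\epsilon^3\bigr)$.
\end{itemize}
With the constants suppressed, this yields \eqref{eq:corrected-complexity-general}, and setting $\delta=0$ gives \eqref{eq:corrected-complexity-zero}. One caveat: Assumption A5 requires $\delta>0$. The case $\delta=0$ should therefore be read as the limit of the bound, which stays valid because every estimate is monotone in $\delta$.
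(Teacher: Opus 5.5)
Your proposal is correct and is essentially the paper's own proof. It uses the same $\epsilon/4$ allocation across the four terms of \eqref{eq:sharpened-constant-bound}, where your $T_1+T_2$ is exactly $C/A$, and keeps the noise/projection contribution as the single square root $B_{\mathrm{new}}/\sqrt{A}$ so that the third entry of the minimum in \eqref{eq:explicit-h} closes it exactly; it then reads off $1/(mh)$ for the oracle count in the same way. On indexing, your first remedy is the right one and is what the paper uses silently: the unrolling lemma applied $N$ times gives $(1-mh)^N W_2(\nu_0,\pi)$ for $\nu_N$. Drop the alternative of absorbing $1/(1-mh)$ into a constant, since $1-mh$ vanishes when $m=M$ and $h=2/(m+M)$, and it would not give the exact guarantee $W_2(\nu_N,\pi)\le\epsilon$ for the stated $N$ anyway.
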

\begin{proof}
We allocate an error budget of \(\epsilon/4\) to each of the
four terms in Eq.~\eqref{eq:sharpened-constant-bound}.
First, \eqref{eq:explicit-N} and \(1-a\le e^{-a}\) imply
\[
  (1-mh)^N W_2(\nu_0,\pi)
  \le \frac{\epsilon}{4}.
\]
Second, the second constraint in \eqref{eq:explicit-h} gives
\[
  \frac{7\sqrt{2}M}{6m}\sqrt{hp}
  \le \frac{\epsilon}{4}.
\]
Third, \eqref{eq:explicit-c} gives
\[
  \frac{M_3}{6m}c^2p^2
  =\frac{\epsilon}{4}.
\]
Finally,
\begin{align}
\frac{B_{\mathrm{new}}^2}
{C+\sqrt{A B_{\mathrm{new}}^2}}
&\le
\frac{B_{\mathrm{new}}}{\sqrt{A}}
\nonumber\\
&=
\sqrt{
\frac{h}{m}
\left(
(p-1)M_1^2
+\frac{p\delta^2}{2c^2}
\right)
}
\nonumber\\
&=
\sqrt{
\frac{h}{m}
\left(
(p-1)M_1^2
+\frac{\delta^2M_3p^3}{3m\epsilon}
\right)
}
\le
\frac{\epsilon}{4},
\label{eq:fractional-term-bound}
\end{align}
where the final inequality follows from the third constraint in
\eqref{eq:explicit-h}.  Adding the four bounds proves the stated
accuracy guarantee.
Furthermore,
\begin{align*}
  \frac{1}{mh}
  =
  \mathcal O\left(
    1+
    \frac{M^2p}{m^3\epsilon^2}
    +
    \frac{M_1^2(p-1)}{m^2\epsilon^2}
    +
    \frac{\delta^2M_3p^3}{m^3\epsilon^3}
  \right).
\end{align*}
Combining this relation with \eqref{eq:explicit-N} proves
\eqref{eq:corrected-complexity-general} and
\eqref{eq:corrected-complexity-zero}.
\end{proof}
\paragraph{Interpretation of the comparison with Roy et al.}
\label{rem:roy-comparison}
Under independent additive
noise with variance bounded by \(\sigma^2\) at each query, one may
take \(\delta^2=\sigma^2\) in Assumption A5.  In that common regime,
the corrected LMC-SPSA complexity is
\[
  \widetilde{\mathcal O}\left(
    \frac{p}{\epsilon^2}
    +
    \frac{\sigma^2p^3}{\epsilon^3}
  \right),
\]
whereas the ZO-LMC result of Roy et al. is
\[
  \widetilde{\mathcal O}\left(
    \frac{\max\{1,\sigma^2\}p^2}{\epsilon^4}
  \right).
\]
Thus LMC-SPSA has the better dependence on the target accuracy,
but the comparison is not uniform in \(p\).  For
\(\sigma=\Theta(1)\), the noisy leading terms favor LMC-SPSA
when \(p\epsilon\lesssim1\).

Table~\ref{tab:corrected-complexity} summarizes the resulting
complexity comparison, while distinguishing the independent
additive-noise setting of Roy et al.~\cite{Roy2022} from the
perfectly paired-noise setting in which the oracle noise cancels
within each SPSA difference.

\begin{table}[t]
\centering
\caption{Oracle-complexity comparison for
$W_2(\nu_N,\pi)\le\epsilon$, where
$\kappa_\sigma:=\max\{1,\sigma^2\}$ and
$\widetilde{\mathcal O}(\cdot)$ suppresses logarithmic and
problem-dependent factors.}
\label{tab:corrected-complexity}

\renewcommand{\arraystretch}{1.25}
\renewcommand{\cellalign}{cc}
\setlength{\tabcolsep}{3pt}
\footnotesize

\begin{tabular}{
@{}
>{\raggedright\arraybackslash}m{0.17\textwidth}
>{\raggedright\arraybackslash}m{0.17\textwidth}
>{\centering\arraybackslash}m{0.22\textwidth}
>{\centering\arraybackslash}m{0.17\textwidth}
>{\centering\arraybackslash}m{0.19\textwidth}
@{}
}
\toprule

Oracle regime
&
Method
&
Calls per iteration
&
Iterations $N$
&
Total calls
\\

\midrule

\makecell[l]{
Independent\\
additive noise
}
&
\makecell[l]{
Roy et al.\\
ZO-LMC
}
&
\makecell[c]{
$2b=\widetilde{\mathcal O}
\!\left(
\dfrac{\kappa_\sigma p}{\epsilon^2}
\right)$
}
&
$\widetilde{\mathcal O}
\!\left(
\dfrac{p}{\epsilon^2}
\right)$
&
$\widetilde{\mathcal O}
\!\left(
\dfrac{\kappa_\sigma p^2}{\epsilon^4}
\right)$
\\[6pt]

\addlinespace[3pt]

\makecell[l]{
Independent\\
additive noise
}
&
\makecell[l]{
Roy et al.\\
ZO-KLMC
}
&
\makecell[c]{
$2b=\widetilde{\mathcal O}
\!\left(
\dfrac{\kappa_\sigma p^{3/2}}{\epsilon^3}
\right)$
}
&
$\widetilde{\mathcal O}
\!\left(
\dfrac{\sqrt p}{\epsilon}
\right)$
&
$\widetilde{\mathcal O}
\!\left(
\dfrac{\kappa_\sigma p^2}{\epsilon^4}
\right)$
\\[6pt]

\addlinespace[3pt]

\makecell[l]{
Independent\\
additive noise
}
&
\makecell[l]{
This work:\\
LMC-SPSA\\
($\delta^2$ = $\sigma^2$)
}
&
$2$
&
$\widetilde{\mathcal O}
\!\left(
\dfrac{p}{\epsilon^2}
+
\dfrac{\sigma^2p^3}{\epsilon^3}
\right)$
&
$\widetilde{\mathcal O}
\!\left(
\dfrac{p}{\epsilon^2}
+
\dfrac{\sigma^2p^3}{\epsilon^3}
\right)$
\\[6pt]

\addlinespace[3pt]

\makecell[l]{
Perfectly paired\\
additive noise
}
&
\makecell[l]{
This work:\\
LMC-SPSA\\
$\delta=0$
}
&
$2$
&
$\widetilde{\mathcal O}
\!\left(
\dfrac{p}{\epsilon^2}
\right)$
&
$\widetilde{\mathcal O}
\!\left(
\dfrac{p}{\epsilon^2}
\right)$
\\[4pt]

\bottomrule
\end{tabular}
\end{table}

\begin{theorem}
\label{thm:shrinking-noisy}
[Vanishing Wasserstein Bound under Shrinking Stepsize and
Perturbation Size).]
Suppose that Assumptions A1--A5 hold, with an arbitrary fixed
$\delta\ge0$. Consider Algorithm~1 with positive sequences
$\{h_k\}_{k\ge0}$ and $\{c_k\}_{k\ge0}$ satisfying
\begin{equation}
0<h_k\le\frac{2}{m+M},
\qquad
\sum_{k=0}^{\infty}h_k=\infty,
\qquad
h_k\longrightarrow0,
\label{eq:shrinking-h-conditions}
\end{equation}
and
\begin{equation}
c_k\longrightarrow0,
\qquad
\frac{h_k}{c_k^2}\longrightarrow0.
\label{eq:shrinking-c-conditions}
\end{equation}
Define
\begin{align}
C_k
&:=
\frac{7\sqrt{2}}{6}M(h_k^3p)^{1/2}
+
\frac{1}{6}h_kc_k^2M_3p^2,
\label{eq:noisy-shrinking-Ck}\\
B_{\mathrm{new},k}^2
&:=
(p-1)h_k^2M_1^2
+
\frac{h_k^2p\delta^2}{2c_k^2}.
\label{eq:noisy-shrinking-Bk}
\end{align}
Then, for every integer $K\ge0$,
\begin{align}
W_2^2(\nu_{K+1},\pi)
\le{}&
\left[
\prod_{j=0}^{K}(1-mh_j)
\right]
W_2^2(\nu_0,\pi)
\nonumber\\
&+
\sum_{i=0}^{K}
\left[
\prod_{j=i+1}^{K}(1-mh_j)
\right]
\left\{
\frac{C_i^2}{mh_i}
+
B_{\mathrm{new},i}^2
\right\}.
\label{eq:noisy-shrinking-W2-bound}
\end{align}
Consequently,
\begin{equation}
\lim_{K\to\infty}W_2(\nu_K,\pi)=0.
\label{eq:noisy-shrinking-convergence}
\end{equation}
\end{theorem}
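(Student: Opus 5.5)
The argument has two parts: a one-step squared recursion, then a Robbins--Siegmund/Toeplitz-type lemma for the unrolled sum.

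\textbf{Step 1: one-step inequality.} We reuse the coupling argument from the proof of Theorem~\ref{thm:constant_stepsize} with the sharpened remainder estimate \eqref{eq:bound-third-improved} of Theorem~\ref{thm:improved_bound}, now at iteration $i$ with $h_i,c_i$. This gives the analogue of \eqref{eq:w2_onestep},
\[
W_2^2(\nu_{i+1},\pi)\le\bigl(\rho_i W_2(\nu_i,\pi)+C_i\bigr)^2+B_{\mathrm{new},i}^2,\qquad \rho_i\le 1-mh_i ,
\]
where $\rho_i$ comes from Lemma~2 of \cite{dalalyan19}, valid since $h_i\le 2/(m+M)$. The cross terms from the centred projection and noise pieces vanish in $L_2$ because these pieces are conditionally mean zero given $(\mathcal F_i,\xi_i)$. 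This uses \eqref{eq:sp-projection-centered} and A5, including the conditional independence of the noise from $\xi_i$; this is why they enter additively as $B^2_{\mathrm{new},i}$.

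Next we square out the bracket using Young's inequality $2ab\le mh_i a^2+b^2/(mh_i)$:
\[
(\rho_i w+C_i)^2\le(1-mh_i)^2(1+mh_i)w^2+\Bigl(1+\tfrac{1}{mh_i}\Bigr)C_i^2 .
\]
Since $(1-x)^2(1+x)\le 1-x$ for $x\in[0,1]$, the coefficient of $w^2$ is at most $1-mh_i$. The factor $1+1/(mh_i)$ is at most a constant times $1/(mh_i)$, because $mh_i\le 1$. If the exact constant in \eqref{eq:noisy-shrinking-W2-bound} is required, we instead use the sharper split $2\rho_iwC_i\le mh_i\rho_i w^2+\rho_iC_i^2/(mh_i)$. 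Either way we obtain
\[
w_{i+1}^2\le(1-mh_i)w_i^2+\frac{C_i^2}{mh_i}+B_{\mathrm{new},i}^2 .
\]
Unrolling from $i=0$ to $K$ gives \eqref{eq:noisy-shrinking-W2-bound}, up to the indexing convention for the products, which we align with the statement.

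\textbf{Step 2: convergence.} The first term tends to zero by the argument of Theorem~2: since $\sum h_j=\infty$, we have $\prod(1-mh_j)\le\exp(-m\sum h_j)\to 0$. For the sum, we write each forcing term as $h_i\,\varepsilon_i$ with
\[
\varepsilon_i=\frac{C_i^2}{mh_i^2}+\frac{B_{\mathrm{new},i}^2}{h_i}.
\]
We check that $\varepsilon_i\to0$:
\begin{itemize}
\item $C_i^2/h_i^2\lesssim M^2h_ip+c_i^4M_3^2p^4\to0$, since $h_i\to0$ and $c_i\to0$;
\item $B_{\mathrm{new},i}^2/h_i=(p-1)h_iM_1^2+p\delta^2h_i/(2c_i^2)\to0$, since $h_i\to0$ and $h_i/c_i^2\to0$.
\end{itemize}

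\textbf{Main obstacle: the weighted-sum lemma.} The key step is to show that $S_K:=\sum_{i\le K}\prod_{j>i}(1-mh_j)\,h_i\varepsilon_i\to0$ when $\varepsilon_i\to0$ and $\sum h_i=\infty$. Fix $\eta>0$ and choose $i_0$ with $\varepsilon_i\le\eta$ for $i\ge i_0$. For the tail we use the telescoping identity
\[
\sum_{i=i_0}^K\prod_{j=i+1}^K(1-mh_j)\,mh_i=1-\prod_{j=i_0}^K(1-mh_j)\le1,
\]
which shows the tail contributes at most $\eta/m$. The head is a finite sum, each term multiplied by $\prod_{j=i_0+1}^K(1-mh_j)\to0$, so it vanishes. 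Hence $\limsup_K S_K\le\eta/m$ for every $\eta>0$, and therefore $W_2^2(\nu_{K+1},\pi)\to0$.
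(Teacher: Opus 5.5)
Your proposal is correct and is essentially the paper's proof: your ``sharper split'' $2\rho_i w C_i\le mh_i\rho_i w^2+\rho_i C_i^2/(mh_i)$ is exactly the paper's Young step, which yields $W_2^2(\nu_{k+1},\pi)\le(1-mh_k)W_2^2(\nu_k,\pi)+C_k^2/(mh_k)+B_{\mathrm{new},k}^2$, and your head/tail telescoping bound on the unrolled sum is just the unrolled form of the paper's centred recursion $W_2^2(\nu_{k+1},\pi)-\eta\le(1-mh_k)\bigl[W_2^2(\nu_k,\pi)-\eta\bigr]$ restarted at $k_\eta$. One caveat is inherited from the paper's proof of Theorem~\ref{thm:constant_stepsize} rather than introduced by you: the third-derivative remainder depends on $\Delta_i$, both directly and through $X_i^{\pm}$, so its cross term with the centred projection error $(\Delta_i\Delta_i^\top-I)\nabla f(X_i)$ is not removed by conditioning on $(\mathcal F_i,\xi_i)$ and is nonzero in general for $p\ge2$; your stated reason for the additive $B_{\mathrm{new},i}^2$ therefore glosses over the same point that \eqref{eq:Dkh_bound} does.
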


\begin{proof}
Under shrinking stepsize and
perturbation size, by the corrected one-step coupling bound \eqref{eq:w2_onestep}, for every $k\ge0$,
\begin{align}
W_2^2(\nu_{k+1},\pi)
\le{}&
\left[
(1-mh_k)W_2(\nu_k,\pi)+C_k
\right]^2
+
B_{\mathrm{new},k}^2,
\label{eq:shrinking-one-step-proof}
\end{align}
where
\begin{align}
C_k
&=
\frac{7\sqrt{2}}{6}M(h_k^3p)^{1/2}
+
\frac{1}{6}h_kc_k^2M_3p^2,
\label{eq:shrinking-C-proof}\\
B_{\mathrm{new},k}^2
&=
(p-1)h_k^2M_1^2
+
\frac{h_k^2p\delta^2}{2c_k^2}.
\label{eq:shrinking-B-proof}
\end{align}
Since
\[
0<h_k\le\frac{2}{m+M},
\]
we have $0<mh_k\le1$. Young's inequality gives
\begin{align}
&2(1-mh_k)C_kW_2(\nu_k,\pi)
\nonumber\\
&\qquad\le
mh_k(1-mh_k)W_2^2(\nu_k,\pi)
+
\frac{1-mh_k}{mh_k}C_k^2.
\label{eq:shrinking-young-proof}
\end{align}
Consequently,
\begin{align}
&\left[
(1-mh_k)W_2(\nu_k,\pi)+C_k
\right]^2
\nonumber\\
&\quad=
(1-mh_k)^2W_2^2(\nu_k,\pi)
+
2(1-mh_k)C_kW_2(\nu_k,\pi)
+
C_k^2
\nonumber\\
&\quad\le
(1-mh_k)W_2^2(\nu_k,\pi)
+
\frac{C_k^2}{mh_k}.
\label{eq:shrinking-square-bound}
\end{align}
Substituting \eqref{eq:shrinking-square-bound} into
\eqref{eq:shrinking-one-step-proof} yields
\begin{equation}
W_2^2(\nu_{k+1},\pi)
\le
(1-mh_k)W_2^2(\nu_k,\pi)
+
\frac{C_k^2}{mh_k}
+
B_{\mathrm{new},k}^2.
\label{eq:shrinking-linear-proof}
\end{equation}
Iterating \eqref{eq:shrinking-linear-proof} from $0$ to $K$ gives
\begin{align}
W_2^2(\nu_{K+1},\pi)
\le{}&
\left[
\prod_{j=0}^{K}(1-mh_j)
\right]
W_2^2(\nu_0,\pi)
\nonumber\\
&+
\sum_{i=0}^{K}
\left[
\prod_{j=i+1}^{K}(1-mh_j)
\right]
\left\{
\frac{C_i^2}{mh_i}
+
B_{\mathrm{new},i}^2
\right\}.
\label{eq:shrinking-unrolled-proof}
\end{align}
This proves the stated nonasymptotic upper bound.
We next show that the bound vanishes. Since
\[
\sum_{k=0}^{\infty}h_k=\infty,
\]
we have
\begin{align}
\prod_{j=0}^{K}(1-mh_j)
&\le
\exp\left(
-m\sum_{j=0}^{K}h_j
\right)
\longrightarrow0.
\label{eq:shrinking-contraction-proof}
\end{align}
It remains to control the accumulated remainder terms. By
$(a+b)^2\le2a^2+2b^2$,
\begin{align}
\frac{C_k^2}{mh_k}
\le{}&
\frac{49M^2p}{9m}h_k^2
+
\frac{M_3^2p^4}{18m}h_kc_k^4.
\label{eq:Ck-square-proof}
\end{align}
Therefore,
\begin{align}
&\frac{1}{h_k}
\left\{
\frac{C_k^2}{mh_k}
+
B_{\mathrm{new},k}^2
\right\}
\nonumber\\
&\quad\le
\frac{49M^2p}{9m}h_k
+
\frac{M_3^2p^4}{18m}c_k^4
+
(p-1)M_1^2h_k
+
\frac{p\delta^2}{2}\frac{h_k}{c_k^2}.
\label{eq:shrinking-remainder-ratio}
\end{align}
By assumption,
\[
h_k\longrightarrow0,
\qquad
c_k\longrightarrow0,
\qquad
\frac{h_k}{c_k^2}\longrightarrow0.
\]
Hence, the right-hand side of
\eqref{eq:shrinking-remainder-ratio} converges to zero. Thus,
\begin{equation}
\frac{C_k^2}{mh_k}
+
B_{\mathrm{new},k}^2
=
o(h_k).
\label{eq:shrinking-remainder-little-o}
\end{equation}
To conclude, fix an arbitrary $\eta>0$. By
\eqref{eq:shrinking-remainder-little-o}, there exists an integer
$k_\eta$ such that, for every $k\ge k_\eta$,
\begin{equation}
\frac{C_k^2}{mh_k}
+
B_{\mathrm{new},k}^2
\le
m\eta h_k.
\label{eq:shrinking-eta-remainder}
\end{equation}
It follows from \eqref{eq:shrinking-linear-proof} that
\begin{align}
W_2^2(\nu_{k+1},\pi)
\le{}&
(1-mh_k)W_2^2(\nu_k,\pi)
+
m\eta h_k.
\label{eq:shrinking-eta-recursion}
\end{align}
Subtracting $\eta$ from both sides of
\eqref{eq:shrinking-eta-recursion}, and using
\[
m\eta h_k-\eta
=
-(1-mh_k)\eta,
\]
we obtain the centered recursion
\begin{equation}
W_2^2(\nu_{k+1},\pi)-\eta
\le
(1-mh_k)
\left[
W_2^2(\nu_k,\pi)-\eta
\right].
\label{eq:shrinking-centered-recursion}
\end{equation}
Thus, after the remainder becomes sufficiently small, the deviation
of $W_2^2(\nu_k,\pi)$ from the arbitrary level $\eta$ contracts at
each iteration by the factor $1-mh_k$.
Starting at $k=k_\eta$, continuing recursively from $k_\eta$ to $K$ yields
\begin{align}
W_2^2(\nu_{K+1},\pi)-\eta
\le{}&
\left[
\prod_{j=k_\eta}^{K}(1-mh_j)
\right]
\left[
W_2^2(\nu_{k_\eta},\pi)-\eta
\right].
\label{eq:shrinking-centered-iteration}
\end{align}
Equivalently,
\begin{align}
W_2^2(\nu_{K+1},\pi)
\le{}&
\left[
\prod_{j=k_\eta}^{K}(1-mh_j)
\right]
W_2^2(\nu_{k_\eta},\pi)
\nonumber\\
&+
\eta
\left[
1-
\prod_{j=k_\eta}^{K}(1-mh_j)
\right].
\label{eq:shrinking-final-eta-bound}
\end{align}
Since $\sum_k h_k=\infty$, the product in
\eqref{eq:shrinking-final-eta-bound} converges to zero. Therefore,
\[
\limsup_{K\to\infty}
W_2^2(\nu_{K+1},\pi)
\le\eta.
\]
Because $\eta>0$ is arbitrary,
\[
\lim_{K\to\infty}
W_2^2(\nu_K,\pi)=0,
\]
and hence
\[
\boxed{
\lim_{K\to\infty}W_2(\nu_K,\pi)=0.
}
\]
\end{proof}
\begin{theorem}[Rate-optimal shrinking parameters under paired noise]
\label{thm:shrinking-rate-delta-positive}
Assume the conditions of Theorem~1 with fixed $\delta>0$ and
$M_3>0$. Choose
\[
a>\frac{2}{3m},
\qquad
k_0\ge
\max\left\{
1,\frac{a(m+M)}{2}
\right\},
\]
and set
\begin{equation}
h_k:=\frac{a}{k+k_0},
\qquad
c_k^2:=
\left(
\frac{9m\delta^2h_k}{2M_3^2p^3}
\right)^{1/3}.
\label{eq:recommended-positive-delta-parameters}
\end{equation}
Then there exists a finite constant $D>0$, independent of $k$,
such that, for every $k\ge0$,
\begin{equation}
W_2^2(\nu_k,\pi)
\le
\frac{D}{(k+k_0)^{2/3}},
\qquad
W_2(\nu_k,\pi)
\le
\frac{\sqrt D}{(k+k_0)^{1/3}}
=
\mathcal O(k^{-1/3}).
\label{eq:positive-delta-wasserstein-rate}
\end{equation}
Up to problem-dependent constants, the leading noise-dependent
contribution to this bound scales as
$\mathcal O(\delta^{2/3}p\,k^{-1/3})$.
\end{theorem}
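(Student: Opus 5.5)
Our plan is to start from the linear recursion \eqref{eq:shrinking-linear-proof} established in the proof of Theorem~\ref{thm:shrinking-noisy}:
\[
W_2^2(\nu_{k+1},\pi)\le (1-mh_k)W_2^2(\nu_k,\pi)+\frac{C_k^2}{mh_k}+B_{\mathrm{new},k}^2 .
\]
This is valid because $k_0\ge a(m+M)/2$ gives $h_k\le 2/(m+M)$. We then substitute the explicit parameters and prove the rate by induction.

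\textbf{Step 1: bound the remainder.} Using \eqref{eq:Ck-square-proof}, the remainder is at most
\[
\frac{49M^2p}{9m}h_k^2+\frac{M_3^2p^4}{18m}h_kc_k^4+(p-1)M_1^2h_k^2+\frac{p\delta^2}{2}\frac{h_k^2}{c_k^2}.
\]
The choice of $c_k^2$ is exactly the minimizer of $\frac{M_3^2p^4}{18m}c_k^4+\frac{p\delta^2}{2}c_k^{-2}$. The first-order condition $\frac{M_3^2p^4}{9m}c^2=\frac{p\delta^2 h_k}{2c^4}$ gives $c^6=\frac{9m\delta^2 h_k}{2M_3^2p^3}$, which matches \eqref{eq:recommended-positive-delta-parameters}. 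Plugging this in, both balanced terms are of order $h_k\cdot (M_3^2p^4/m)^{1/3}(p\delta^2 h_k)^{2/3}\cdot p^{-2/3}$. In total we expect a contribution
\[
\kappa\, h_k^{5/3},\qquad \kappa\asymp \Bigl(\tfrac{M_3^2}{m}\Bigr)^{1/3}\delta^{4/3}p^{2}.
\]
The other two terms are $O(h_k^2)\le O(h_k^{5/3})$ since $h_k\le a/k_0$ is bounded. Hence the remainder is at most $E\,h_k^{5/3}$, where
\[
E=\kappa+\Bigl(\tfrac{49M^2p}{9m}+(p-1)M_1^2\Bigr)(a/k_0)^{1/3}.
\]
The leading constant is $\kappa\propto\delta^{4/3}p^2$. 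Its square root gives the announced $\delta^{2/3}p\,k^{-1/3}$ scaling.

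\textbf{Step 2: induction.} Write $u_k=W_2^2(\nu_k,\pi)$ and $n=k+k_0$. The recursion becomes
\[
u_{k+1}\le \Bigl(1-\frac{ma}{n}\Bigr)u_k+E a^{5/3}n^{-5/3}.
\]
We claim $u_k\le D n^{-2/3}$ with
\[
D=\max\Bigl\{k_0^{2/3}W_2^2(\nu_0,\pi),\ \frac{E a^{5/3}}{ma-2/3}\cdot\gamma\Bigr\},
\]
where $\gamma$ is a harmless constant absorbing the discrepancy between $n$ and $n+1$. The base case $k=0$ holds by the first entry of the max. For the inductive step we need
\[
\Bigl(1-\frac{ma}{n}\Bigr)Dn^{-2/3}+Ea^{5/3}n^{-5/3}\le D(n+1)^{-2/3}.
\]
Using $(n+1)^{-2/3}\ge n^{-2/3}\bigl(1-\tfrac{2}{3n}\bigr)$ (convexity), it suffices that $D\bigl(ma-\tfrac23\bigr)\ge Ea^{5/3}$. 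This is exactly where the hypothesis $a>2/(3m)$ enters. In fact convexity already gives the needed inequality directly, so $\gamma=1$ should suffice.

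Finally $W_2\le\sqrt D\,(k+k_0)^{-1/3}$. Since $\sqrt D$ is dominated for large problem constants by $\sqrt{\kappa a^{5/3}/(ma-2/3)}\propto \delta^{2/3}p$, this yields the stated scaling.

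\textbf{Main obstacle.} The delicate point is the algebra in Step 1: verifying that the balanced choice of $c_k$ makes the two competing terms exactly order $h_k^{5/3}$ with the claimed $\delta^{4/3}p^2$ coefficient. The cross terms from $(a+b)^2\le 2a^2+2b^2$ must not spoil this. We would check the exponents of $p$ carefully, since $p^4\cdot (p^{-3})^{2/3}=p^2$ and $p\cdot (p^{3})^{1/3}=p^2$ must agree. The induction step itself is routine once the $n$ versus $n+1$ convexity inequality is in place.
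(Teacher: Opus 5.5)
Your proposal is correct and follows the paper's proof essentially step for step. You start from the linear recursion of Theorem~5 and balance the two $c_k$-dependent terms to get a remainder $Qh_k^{5/3}$, absorbing the $h_k^2$ terms via $h_k^2\le (a/k_0)^{1/3}h_k^{5/3}$. You then close the induction with $D\ge\max\{k_0^{2/3}W_2^2(\nu_0,\pi),\,Qa^{5/3}/(ma-\tfrac23)\}$ using convexity of $x\mapsto x^{-2/3}$, so indeed $\gamma=1$.

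The only blemish is cosmetic. Your intermediate expression $h_k(M_3^2p^4/m)^{1/3}(p\delta^2h_k)^{2/3}p^{-2/3}$ scales like $p^{4/3}$. The actual minimum value is $\frac{M_3^2p^4}{6m}\bigl(\frac{9m\delta^2}{2M_3^2p^3}\bigr)^{2/3}h_k^{5/3}\propto \delta^{4/3}p^2h_k^{5/3}$, which is the $\kappa$ you correctly state at the end.
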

\begin{proof}
Combining \eqref{eq:shrinking-linear-proof} with
\eqref{eq:shrinking-remainder-ratio}, we obtain
\begin{align}
W_2^2(\nu_{k+1},\pi)
&\le
(1-mh_k)W_2^2(\nu_k,\pi)
+
\left(
\frac{49M^2p}{9m}
+
(p-1)M_1^2
\right)h_k^2
\nonumber\\
&\quad
+
\frac{M_3^2p^4}{18m}h_kc_k^4
+
\frac{p\delta^2}{2}\frac{h_k^2}{c_k^2}.
\label{eq:theorem6-starting-recursion}
\end{align}
For fixed $h_k$, it remains to choose $c_k$ to balance the two $c_k$-dependent terms
\[
\frac{M_3^2p^4}{18m}h_kc_k^4
+
\frac{p\delta^2}{2}\frac{h_k^2}{c_k^2}.
\]
Regarding the preceding expression as a function
of $c_k^2$, its unique minimizer is
\begin{equation}
c_k^2
=
\left(
\frac{9m\delta^2h_k}
     {2M_3^2p^3}
\right)^{1/3}.
\label{eq:optimal-ck-positive-delta-proof}
\end{equation}
At this choice,
\begin{align}
&
\frac{M_3^2p^4}{18m}h_kc_k^4
+
\frac{p\delta^2}{2}\frac{h_k^2}{c_k^2}.
\nonumber
=
\frac{M_3^2p^4}{6m}
\left(
\frac{9m\delta^2}
     {2M_3^2p^3}
\right)^{2/3}
h_k^{5/3}.
\label{eq:optimized-remainder-ratio}
\end{align}
Moreover, since $(h_k)_{k\ge0}$ is uniformly bounded and
$h_k^2=o(h_k^{5/3})$, there exists a constant $Q>0$, independent
of $k$, such that
\begin{equation}
\left(
\frac{49M^2p}{9m}
+
(p-1)M_1^2
\right)h_k^2
+
\frac{M_3^2p^4}{18m}h_kc_k^4
+
\frac{p\delta^2}{2}\frac{h_k^2}{c_k^2}
\le
Qh_k^{5/3}.
\label{eq:optimized-total-local-remainder}
\end{equation}
Combining \eqref{eq:theorem6-starting-recursion} and
\eqref{eq:optimized-total-local-remainder}, and substituting
$h_k=a/(k+k_0)$, gives
\begin{equation}
W_2^2(\nu_{k+1},\pi)
\le
\left(
1-\frac{ma}{k+k_0}
\right)
W_2^2(\nu_k,\pi)
+
\frac{Qa^{5/3}}{(k+k_0)^{5/3}}.
\label{eq:harmonic-wasserstein-recursion}
\end{equation}
Choose \(D>0\) sufficiently large so that
\begin{equation}
D
\ge
\max\left\{
k_0^{2/3}W_2^2(\nu_0,\pi),
\frac{Qa^{5/3}}{ma-\frac{2}{3}}
\right\}.
\label{eq:choice-of-induction-constant}
\end{equation}
This choice is well defined because \(a>2/(3m)\).
We prove by induction that
\begin{equation}
W_2^2(\nu_k,\pi)
\le
\frac{D}{(k+k_0)^{2/3}},
\qquad k\ge 0.
\label{eq:induction-claim-positive-delta}
\end{equation}
The claim holds at \(k=0\) by
\eqref{eq:choice-of-induction-constant}. Suppose that it holds
at iteration \(k\). Then, by
\eqref{eq:harmonic-wasserstein-recursion},
\begin{align}
W_2^2(\nu_{k+1},\pi)
&\le
\left(
1-\frac{ma}{k+k_0}
\right)
\frac{D}{(k+k_0)^{2/3}}
+
\frac{Qa^{5/3}}{(k+k_0)^{5/3}}
\nonumber\\
&=
\frac{D}{(k+k_0)^{2/3}}
-
\frac{maD-Qa^{5/3}}{(k+k_0)^{5/3}}.
\label{eq:induction-recursion-positive-delta}
\end{align}
By the second condition in
\eqref{eq:choice-of-induction-constant},
\[
\left(ma-\frac{2}{3}\right)D
\ge
Qa^{5/3},
\]
and hence
\[
maD-Qa^{5/3}
\ge
\frac{2D}{3}.
\]
Therefore,
\begin{equation}
W_2^2(\nu_{k+1},\pi)
\le
\frac{D}{(k+k_0)^{2/3}}
-
\frac{2D}{3(k+k_0)^{5/3}}.
\label{eq:induction-tangent-positive-delta}
\end{equation}
Moreover, since \(x\mapsto x^{-2/3}\) is convex on
\((0,\infty)\), the supporting-line inequality at
\(x=k+k_0\) gives
\begin{align}
(k+k_0+1)^{-2/3}
&\ge
(k+k_0)^{-2/3}
-\frac{2}{3}(k+k_0)^{-5/3}.
\label{eq:convexity-power-minus-two-thirds}
\end{align}
Multiplying \eqref{eq:convexity-power-minus-two-thirds} by
\(D\) and combining it with
\eqref{eq:induction-tangent-positive-delta}, we obtain
\[
W_2^2(\nu_{k+1},\pi)
\le
\frac{D}{(k+k_0+1)^{2/3}}.
\]
Thus, the induction closes, and
\eqref{eq:induction-claim-positive-delta} holds for every
\(k\ge 0\).
Taking square roots gives
\[
W_2(\nu_k,\pi)
\le
\frac{\sqrt D}{(k+k_0)^{1/3}}.
\]
\end{proof}
\section{Numerical Experiments}
\subsection{Dimension-Dependence validation: \(O(p^2)\) Scaling}

We first validate the sharpened dimension dependence predicted by our theory. 
The target potential is a quadratic
\begin{equation}
    f(x) \;=\; \tfrac{1}{2}\sum_{i=1}^{p} (x_i - 2)^2,
    \qquad \pi(x)\propto e^{-f(x)} ,
\end{equation}
i.e., a Gaussian with mean \(2\cdot \mathbf{1}_p\).
We work in the \emph{noisy-oracle} regime where only noisy log-likelihood values 
\(
\tilde f(x)=f(x)+\varepsilon, \varepsilon\sim\mathcal N\!\big(0,\sigma^2 p\big),
\) are available. LMC-SPSA forms a local score approximation 
from two such noisy evaluations per iteration. For each dimension \(p\), we run the sampler
and estimate the terminal error \(W_2(\nu_K,\pi)\); we then 
plot the normalized curve \(W_2(\nu_K,\pi)/p^2\) versus \(p\).

\noindent
\textbf{Interpretation and relevance.}
As shown in Fig.~\ref{fig:w2p2_vs_p}, the normalized error \(W_2/p^2\) exhibits a clear
flattening trend with increasing \(p\), consistent with our \(O(p^2)\) bound. We adopt this model for applicability: as it happens oftenly in many real scenarios (e.g., black-box simulators,
privacy/sensor noise), \(\nabla f\) is unavailable while only
noisy function evaluations can be measured.

\begin{figure}[H]
    \centering
    
    \includegraphics[width=0.5\linewidth]{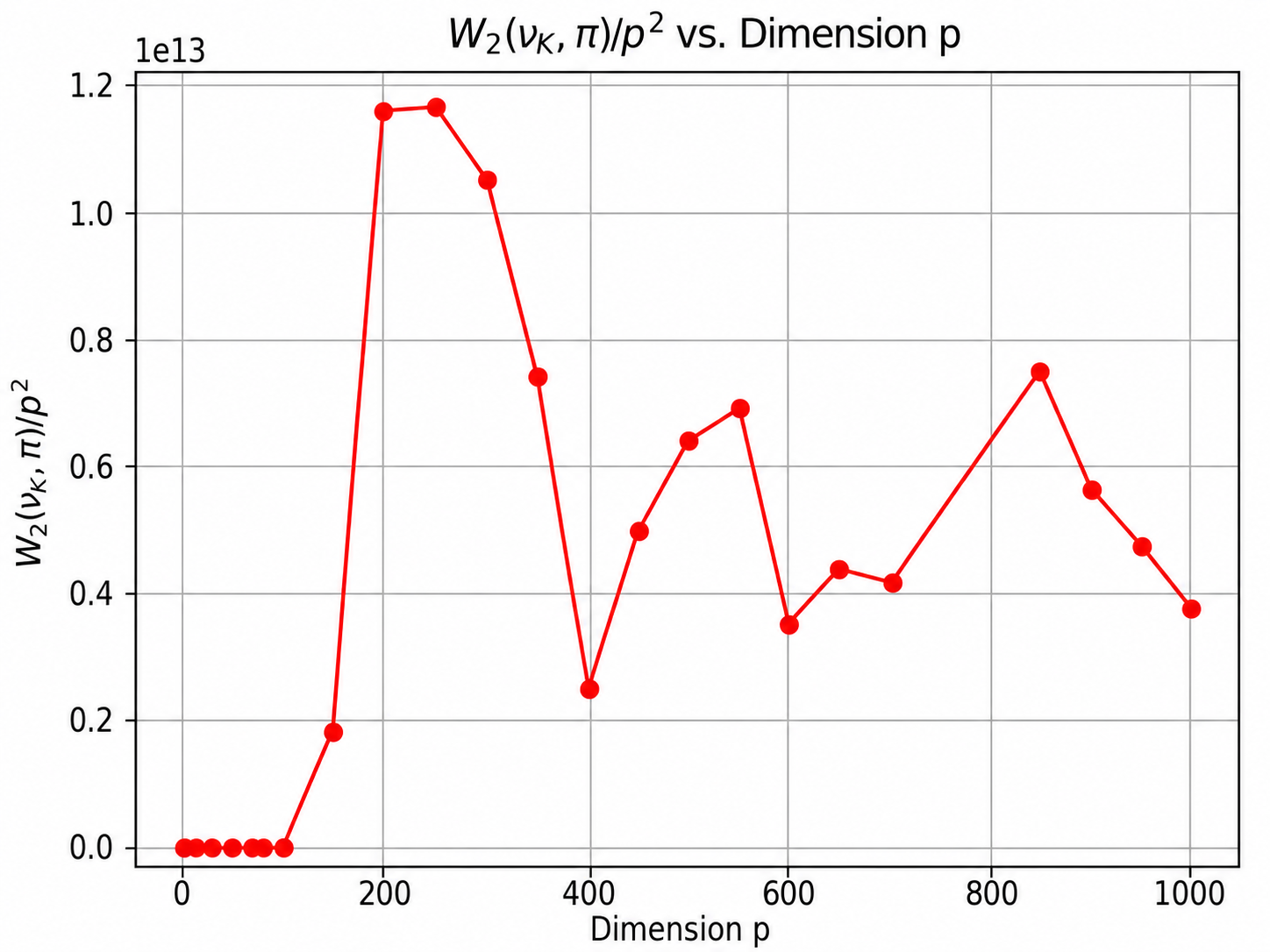}
    \caption{\(W_2(\nu_K,\pi)/p^2\) versus dimension \(p\).
    Fluctuations at moderate \(p\) are due to finite-iteration and noise effects.}
    \label{fig:w2p2_vs_p}
\end{figure}
\subsection{Vanishing schedules and oracle efficiency under a fixed call budget}
\label{sec:vanishing_oracle_efficiency}
A key implication of our refined nonasymptotic \(W_2\) bounds (Theorem~1/3) is that
\emph{constant} stepsize/perturbation generally induces a non-vanishing error floor
(from Euler discretization and gradient-estimation bias),
whereas a properly chosen \emph{shrinking} schedule \((h_k,c_k)\) yields a
\emph{vanishing guarantee} in the sense that the additive remainder in the \(W_2\) upper bound
goes to zero as \(k\to\infty\).
This section provides a call-budget comparison that makes both the
\emph{vanishing guarantee} and the \emph{oracle efficiency} (calls per iteration) explicit.

\paragraph{Target and oracle model.}
We consider a strongly log-concave target \(\pi\) on \(\mathbb{R}^p\) with potential \(f\),
and access \(f\) through a noisy function-value oracle which matches Assumption~A5 with unbiased noise and variance level.
All methods start from the same deterministic initialization \(x_0\), and are compared under a
\emph{fixed oracle-call budget} \(B\) (total number of noisy function evaluations).

\paragraph{Algorithms and oracle cost per iteration.}
We compare:
(i) LMC-SPSA (Algorithm~1),
(ii) LMC-FDSA (finite-difference variant),
and (iii) the one-point ZO-LMC method of \cite{Roy2022}.
A critical distinction is the number of oracle calls per iteration:
\[
\text{LMC-SPSA: }2\ \text{calls/iter},\qquad
\text{LMC-FDSA: }2p\ \text{calls/iter},\qquad
\text{Roy: }2b\ \text{calls/iter},
\]
where \(b\) is the batch size used in \cite{Roy2022}.
Therefore, under the same call budget \(B\), the effective iteration counts scale as
\[
K_{\mathrm{SPSA}}\approx \Big\lfloor \tfrac{B}{2}\Big\rfloor,\qquad
K_{\mathrm{FDSA}}\approx \Big\lfloor \tfrac{B}{2p}\Big\rfloor,\qquad
K_{\mathrm{Roy}}(b)\approx \Big\lfloor \tfrac{B}{2b}\Big\rfloor.
\]
This \emph{oracle efficiency} effect is unavoidable: larger \(p\) (FDSA) or larger \(b\) (Roy)
reduces the number of Langevin updates that can be executed within a fixed call budget.

\paragraph{Theoretically justified parameter choices.}
For LMC-SPSA and LMC-FDSA, we use shrinking schedules in the admissible theoretical regime.
In contrast, for Roy's method we follow their paper and use the \emph{constant} tuning recommended
by \cite{Roy2022} (Theorem~4.2): constant stepsize \(h\), smoothing radius \(\nu\),
and batch size \(b\). We report \(b\in\{1,4,128\}\) as well as the theoretical choice \(b^\star\)
(from \cite{Roy2022}) to illustrate the variance--iteration tradeoff.

\paragraph{Diagnostics and plotting protocol.}
We compare all methods under a fixed oracle-call budget \(B\) and summarize
performance over \(R\) independent chains. Let
\(\{X_s\}_{s\geq 1}\subset\mathbb{R}^p\) denote the post-burn-in samples
for a given method, and let \(t(c)\) be the number of post-burn-in samples
available by oracle-call count \(c\). We index all diagnostics by oracle calls
to ensure fair comparisons across methods with different numbers of function
evaluations per iteration. Define the post-burn-in cumulative mean and MSE by
\[
\widehat{\mu}_{\mathrm{cum}}(c)
\triangleq
\frac{1}{t(c)}\sum_{s=1}^{t(c)}X_s,
\qquad
\mathrm{MSE}_{\mathrm{cum}}(c)
\triangleq
\left\|\widehat{\mu}_{\mathrm{cum}}(c)-2\mathbf{1}_p\right\|_2^2.
\]
For the moment diagnostics, write
\(\widehat{\mu}_j(c)=\frac{1}{t(c)}
\sum_{s=1}^{t(c)}X_{s,j}\) and
\[
\begin{aligned}
\widehat{\operatorname{Var}}_j(c)
&=
\frac{1}{t(c)}
\sum_{s=1}^{t(c)}X_{s,j}^2
-\widehat{\mu}_j(c)^2,
\\[-0.15em]
\widehat{m}_{4,j}(c)
&=
\widehat{\mathbb{E}}_c[X_j^4]
-4\widehat{\mu}_j(c)\widehat{\mathbb{E}}_c[X_j^3]
+6\widehat{\mu}_j(c)^2\widehat{\mathbb{E}}_c[X_j^2]
-3\widehat{\mu}_j(c)^4.
\end{aligned}
\]
where
\[
\widehat{\mathbb{E}}_c[X_j^r]
=
\frac{1}{t(c)}
\sum_{s=1}^{t(c)}X_{s,j}^r,
\qquad r=2,3,4.
\]
For the target \(\mathcal{N}(2\mathbf{1}_p,I_p)\), we report the
coordinate-averaged signed deviations
\[
\begin{aligned}
\operatorname{Dev}_1(c)
&=
\frac{1}{p}\sum_{j=1}^p
\bigl(\widehat{\mu}_j(c)-2\bigr),
&
\operatorname{Dev}_2(c)
&=
\frac{1}{p}\sum_{j=1}^p
\bigl(\widehat{\operatorname{Var}}_j(c)-1\bigr),
\\[-0.15em]
\operatorname{Dev}_4(c)
&=
\frac{1}{p}\sum_{j=1}^p
\bigl(\widehat{m}_{4,j}(c)-3\bigr).
\end{aligned}
\]
For the MSE, we plot the mean across the \(R\) chains together with the
interquartile range (25th--75th percentiles). For the signed moment diagnostics,
we plot the ensemble mean with a horizontal zero line indicating exact
agreement with the corresponding target moment. The signs of
\(\operatorname{Dev}_2\) and \(\operatorname{Dev}_4\) indicate under- or
over-dispersion and lighter- or heavier-than-Gaussian tails, respectively.

\paragraph{Results: vanishing behavior and the \(b\)-tradeoff.}
Figure~\ref{fig:shrink_oracle_combo} highlights two effects.
First, the shrinking schedules
produce sustained improvement in the cumulative mean MSE across the oracle-call budget,
consistent with our \emph{vanishing guarantee} (the additive remainder term in the \(W_2\) upper bound decreases to zero).
Second, oracle efficiency strongly shapes practical performance under a fixed budget:
Roy's method performs best for small \(b\) (more Langevin updates under the same \(B\)),
whereas large \(b\) (e.g., \(b=128\)) can stagnate because it executes too few iterations.
The theoretically recommended \(b^\star\) provides a principled middle ground among Roy variants.
Under equal oracle calls, LMC-SPSA remains the most competitive overall due to its two-call gradient estimator
combined with a shrinking-parameter schedule.
The signed moment diagnostics complement the MSE view by indicating the direction of moment deviations
(under/over-dispersion and light/heavy tails) while trending toward zero as the oracle budget increases.

\begin{figure}[t]
  \centering

  \begin{subfigure}[t]{0.8\linewidth}
    \centering
    \includegraphics[width=\linewidth]{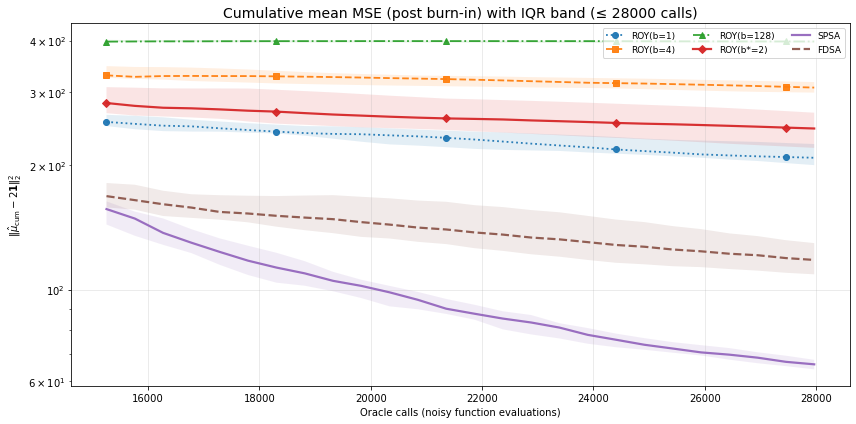}
    \caption{\textbf{Cumulative mean MSE} (post burn-in). Curves show the ensemble median with interquartile (25\%--75\%) bands.}
    \label{fig:shrink_oracle_mse}
  \end{subfigure}

  \vspace{0.6em}

  \begin{subfigure}[t]{0.98\linewidth}
    \centering
    \includegraphics[width=\linewidth]{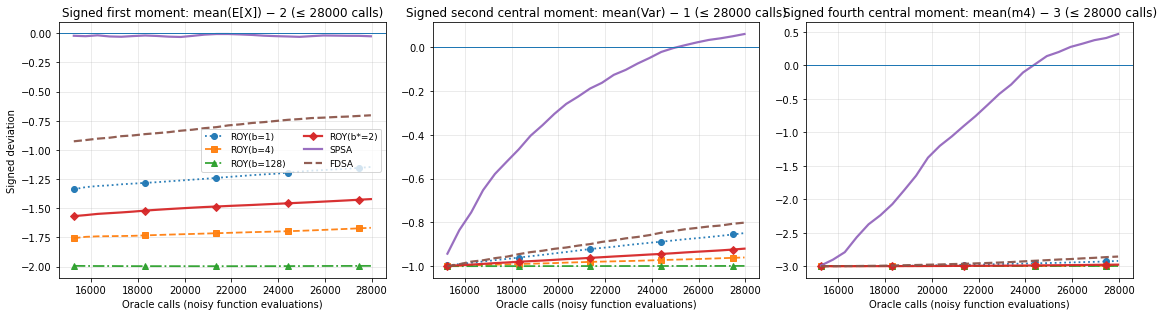}
    \caption{\textbf{Signed cumulative moment deviations} (post burn-in, coordinate-averaged). 
    }
    \label{fig:shrink_oracle_moments}
  \end{subfigure}

  \caption{\textbf{Oracle-budget comparison under shrinking schedules (truncated to $\leq 28{,}000$ oracle calls).}
  }
  \label{fig:shrink_oracle_combo}
\end{figure}

\subsection{Dimension scaling under a fixed oracle-call budget}
\label{subsec:dim_scaling_fixed_budget}

We complement the main call-by-call trajectories with a dimension-scaling study under a \emph{fixed} oracle-call budget.
We consider the quadratic potential
$f(x)=\tfrac12\|x-2\mathbf{1}\|_2^2$, so the target is $\pi=\mathcal{N}(2\mathbf{1},I_p)$, and we use a noisy function-value oracle
\[
\tilde f(x)=f(x)+\varepsilon,\qquad \varepsilon\sim\mathcal{N}(0,\sigma^2 p),
\]
with $\sigma=0.1$. For each dimension $p\in\{5,10,20,30,50,70,100\}$, we allocate a total budget of $B=10^4$ noisy function evaluations
and run each method from $X_0=0$ for $10$ independent seeds. Performance is summarized by the median over seeds of
(i) the absolute errors in the first raw moment and the second
and fourth central moments of the first coordinate, and
(ii) the squared Euclidean error
\(\|\widehat{\mu}-\mu\|_2^2\) of the empirical mean.
For each run, we discard the first half of the trajectory as
burn-in and compute all metrics using the remaining samples.

\paragraph{Findings.}
Figure~\ref{fig:dim_scaling_metrics} reports the scaling of all four metrics with $p$ at fixed $B$.
\begin{figure}[H]
    \centering
    \includegraphics[width=0.98\linewidth]{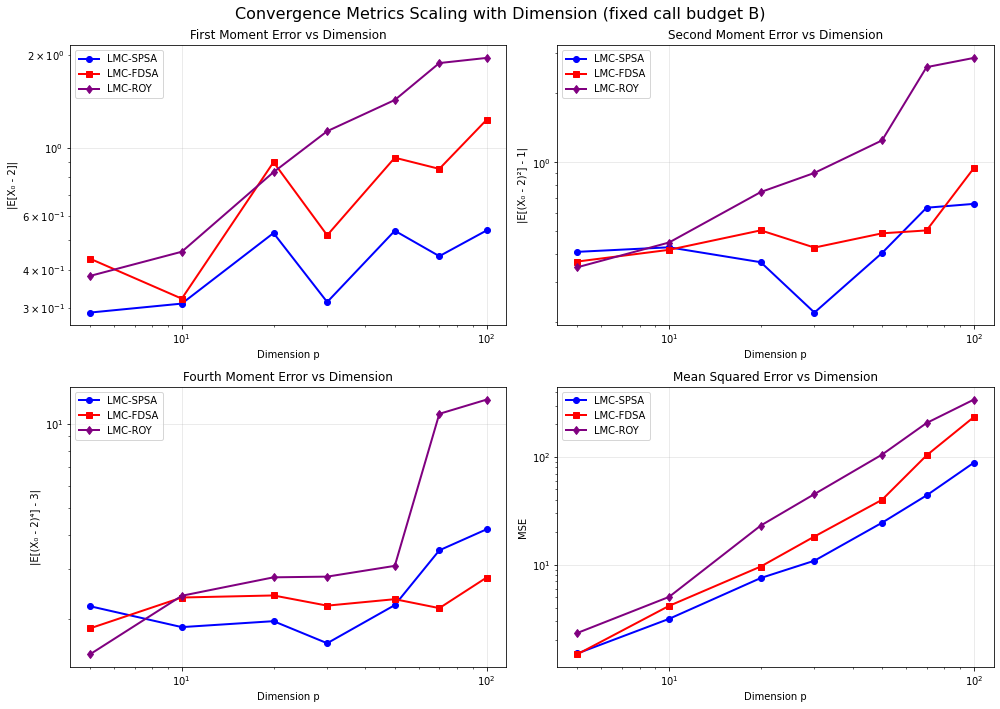}
    \vspace{-0.5em}
    \caption{Scaling with dimension \(p\) under a fixed oracle-call
budget \(B=10^4\). For each method and dimension, the curves report
the median over 20 independent runs after discarding the first half
of each trajectory as burn-in. The first three panels show the
absolute errors in the first/second/fourth moments of the first coordinate. The final panel shows the
squared Euclidean error
\(\|\widehat{\mu}-\mu\|_2^2\) of the empirical mean.}
    \label{fig:dim_scaling_metrics}
\end{figure}

\noindent Across this range of dimensions, LMC-SPSA consistently attains the smallest (or comparable smallest) errors and exhibits the best scaling in $p$,
which is consistent with the fact that it preserves the shrinking (vanishing) schedule while spending only $2$ oracle calls per step.
In contrast, LMC-FDSA deteriorates faster with dimension under the same budget due to its $2p$ calls/iteration requirement. Roy's method exhibits larger errors over the moderate-to-large dimensional
regime under the same oracle budget, even though $b$ remains small
(here $b=1$ for $p\le 50$ and $b=2$ for $p\ge 70$). 
A log-log regression of the mean-squared error suggests
\[
\mathrm{MSE}\approx p^{1.336}\ \text{(SPSA)},\qquad
\mathrm{MSE}\approx p^{1.642}\ \text{(FDSA)},\qquad
\mathrm{MSE}\approx p^{1.731}\ \text{(Roy)},
\]
highlighting the advantage of the shrinking SPSA scheme under increasing dimension.

\section{Conclusion}
We analyzed LMC-SPSA for sampling under noisy zeroth-order
feedback, using only two function evaluations per iteration. By
exploiting the exact second-moment structure of the Rademacher
simultaneous perturbations, we obtained sharper non-asymptotic
Wasserstein bounds with substantially improved dimension
dependence for both the projection error and the higher-order SPSA
bias.

We also derived explicit constant-parameter choices and established
vanishing Wasserstein bounds under shrinking step and perturbation
sequences, including the case of persistent paired noise. The
resulting parameter recommendations clarify how the perturbation
size should balance finite-difference bias and noise amplification.
Compared with the one-point independent-noise ZO-LMC analysis of
Roy et al., our results improve the dependence on the target
accuracy, while exhibiting a different dimension--accuracy
tradeoff. Under perfectly paired noise, the oracle-complexity
guarantee improves further because the paired noise cancels within
each SPSA difference.

Finally, under matched noisy-function-evaluation budgets, the
reported experiments show that LMC-SPSA achieves lower
mean-estimation errors and more stable higher-moment diagnostics
than LMC-FDSA and Roy et al.'s ZO-LMC across the settings
considered. These theoretical and empirical results support
LMC-SPSA as a query-efficient sampling method for noisy black-box
targets.

\textbf{Future work.}
The variance-reduction ideas explored in the HMC setting---multiple perturbations per step (mSPSA) and SVRG-style control variates for SPSA~\cite{hiruntiaranakul2024spsahmc}---are natural candidates for extension to \emph{LMC-SPSA with noise}.
Two open questions are particularly relevant under our \emph{matched-budgets} protocol:
(i) when does the variance reduction outweigh the increased cost so that MSE per oracle call improves;
(ii) how to adapt our nonasymptotic $W_2$ analysis to account for the reduced gradient variance while preserving the $O(p^2)$ dimension term.
We leave a systematic development of mSPSA-/SVRG-enhanced LMC-SPSA to future work.

\appendix
\section*{Appendices}
\addcontentsline{toc}{section}{Appendix}
\setcounter{section}{0} 
\renewcommand{\thesection}{\Alph{section}}
\begin{appendices}

\setcounter{equation}{0}
\renewcommand{\theequation}{A.\arabic{equation}}
\setcounter{theorem}{0}
\renewcommand{\thetheorem}{A.\arabic{theorem}}

\section{LMC-FDSA baseline and parameter calibration}
\label{app:fdsa}

This appendix provides the LMC-FDSA bounds used to calibrate the
finite-difference baseline in the numerical comparisons. Each
LMC-FDSA iteration uses $2p$ function evaluations, in contrast to
the two evaluations used by LMC-SPSA.
Let $\{e_i\}_{i=1}^p$ denote the standard basis of $\mathbb R^p$.
At iteration $k$, define the coordinatewise central-difference
estimator
\begin{equation}
\widetilde G_{\mathrm{FD}}(X_k)
=
\left(
\widetilde g_{k,1},\ldots,\widetilde g_{k,p}
\right)^\top,
\qquad
\widetilde g_{k,i}
=
\frac{
\widetilde f(X_k+c_ke_i)
-
\widetilde f(X_k-c_ke_i)
}{2c_k}.
\label{eq:fdsa-estimator}
\end{equation}
Write the corresponding oracle noises as
$\varepsilon_{k,i}^{(+)}$ and $\varepsilon_{k,i}^{(-)}$. We use
the following coordinatewise analogue of Assumption~A5:
\begin{align}
\mathbb E\left[
\varepsilon_{k,i}^{(+)}
-
\varepsilon_{k,i}^{(-)}
\mid\mathcal F_k
\right]
&=0,
\label{eq:fdsa-noise-mean}\\
\mathbb E\left[
\left(
\varepsilon_{k,i}^{(+)}
-
\varepsilon_{k,i}^{(-)}
\right)^2
\mid\mathcal F_k
\right]
&\le 2\delta^2,
\qquad i=1,\ldots,p.
\label{eq:fdsa-noise-second-moment}
\end{align}
The vector of oracle-noise differences is also assumed to be
conditionally independent of the current Gaussian innovation
given $\mathcal F_k$. No independence across coordinates is needed.
In particular, independent additive noises with variance at most
$\sigma^2$ satisfy these conditions with $\delta^2=\sigma^2$.

\begin{theorem}[Constant-parameter bound for noisy LMC-FDSA]
\label{thm:fdsa-constant}
Assume the smoothness and strong-convexity conditions of
Theorem~1, together with
\eqref{eq:fdsa-noise-mean}--\eqref{eq:fdsa-noise-second-moment}.
Consider LMC-FDSA with constant parameters $h_k\equiv h$ and
$c_k\equiv c>0$, where $0<h\le 2/(m+M)$. Define
\begin{equation}
A:=mh,
\qquad
B_{\mathrm{FD}}^2
:=
\frac{h^2p\delta^2}{2c^2},
\qquad
C_{\mathrm{FD}}
:=
\frac{7\sqrt2}{6}M(h^3p)^{1/2}
+
\frac16hc^2M_3\sqrt p.
\label{eq:fdsa-ABC}
\end{equation}
Then, for every $K\ge0$,
\begin{equation}
W_2(\nu_K,\pi)
\le
(1-mh)^K W_2(\nu_0,\pi)
+
\frac{C_{\mathrm{FD}}}{A}
+
\frac{B_{\mathrm{FD}}^2}
{C_{\mathrm{FD}}+\sqrt{A B_{\mathrm{FD}}^2}}.
\label{eq:fdsa-sharp-constant-bound}
\end{equation}
Consequently,
\begin{align}
W_2(\nu_K,\pi)
\le{}&
(1-mh)^K W_2(\nu_0,\pi)
+
\frac{7\sqrt2M}{6m}\sqrt{hp}
\nonumber\\
&+
\frac{M_3}{6m}c^2\sqrt p
+
\frac{\delta}{c}\sqrt{\frac{hp}{2m}}.
\label{eq:fdsa-simplified-constant-bound}
\end{align}
\end{theorem}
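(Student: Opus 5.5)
The plan is to reuse the coupling argument from the proof of Theorem~\ref{thm:constant_stepsize} verbatim and change only the gradient-error decomposition. Couple $X_k\sim\nu_k$ optimally with $L_0\sim\pi$, run the Langevin diffusion for time $h$ with the same Gaussian innovation, and write $D_{h,k+1}=L_h-X_{k+1}$ as in \eqref{eq:D-recursion}, with $\tilde G$ replaced by $\widetilde G_{\mathrm{FD}}(X_k)$. The drift-contraction part $D_{0k}-h(\nabla f(X_k+D_{0k})-\nabla f(X_k))$ is handled by Lemma~2 of \cite{dalalyan19}, giving the factor $\rho=1-mh$ for $h\le 2/(m+M)$. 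The diffusion-discretization integral is handled by Lemma~4 of \cite{dalalyan19}, giving $\frac{7\sqrt2}{6}M(h^3p)^{1/2}$ exactly as in \eqref{eq:lemma4_bound}.

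For the FD error, use a third-order Taylor expansion along $\pm c e_i$ in each coordinate:
\[
\widetilde g_{k,i}-\partial_i f(X_k)=\frac{c^2}{12}\bigl(\partial_{iii}f(X_{k,i}^+)+\partial_{iii}f(X_{k,i}^-)\bigr)+\frac{\varepsilon_{k,i}^{(+)}-\varepsilon_{k,i}^{(-)}}{2c}.
\]
Unlike SPSA, there is no projection error, because each coordinate is probed exactly, so the $M_1$ term disappears. By A3, each bias coordinate is at most $\frac{c^2}{6}M_3$, so the bias vector has Euclidean norm at most $\frac{c^2}{6}M_3\sqrt p$ almost surely. This gives the term $\frac16 hc^2M_3\sqrt p$ in $C_{\mathrm{FD}}$.

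The noise vector $\eta_k$ has conditional mean zero by \eqref{eq:fdsa-noise-mean}. By \eqref{eq:fdsa-noise-second-moment}, summed over coordinates without needing cross-coordinate independence, it satisfies $\mathbb E\|\eta_k\|^2\le p\,\delta^2/(2c^2)$. Multiplying by $h$ gives $B_{\mathrm{FD}}^2$.

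The key step is combining these into the one-step inequality
\[
W_2^2(\nu_{k+1},\pi)\le\bigl\{(1-mh)W_2(\nu_k,\pi)+C_{\mathrm{FD}}\bigr\}^2+B_{\mathrm{FD}}^2.
\]
Here the mean-zero noise must be orthogonal in $L_2$ to all other terms. The deterministic-given-$\mathcal F_k$ terms (contraction and bias) are $\mathcal F_k$-measurable up to the Taylor points, which depend only on $X_k$. The diffusion remainder depends on the Brownian increment, and conditional independence of the noise vector from the Gaussian innovation given $\mathcal F_k$ makes the cross term vanish. I expect this orthogonality bookkeeping to be the main point requiring care, and it is exactly what Assumption~A5's analogue was designed for.

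Then apply Lemma~1 of \cite{sun23} with $A=mh$, $B=B_{\mathrm{FD}}$, $C=C_{\mathrm{FD}}$, exactly as in \eqref{eq:W2-final-short}, to get \eqref{eq:fdsa-sharp-constant-bound}. For the simplified form \eqref{eq:fdsa-simplified-constant-bound}, bound the last fraction by $B_{\mathrm{FD}}/\sqrt A=\frac{\delta}{c}\sqrt{hp/(2m)}$ as in \eqref{eq:Bnew_simplify}, and expand $C_{\mathrm{FD}}/A$ into its two pieces.
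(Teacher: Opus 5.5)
Your proposal is correct and follows essentially the same route as the paper: the same coupling with Lemmas~2 and~4 of \cite{dalalyan19}, the coordinatewise Taylor bias bound $\frac{c^2M_3}{6}\sqrt p$ with no projection ($M_1$) term, the noise bound $p\delta^2/(2c^2)$ without cross-coordinate independence, the one-step inequality $W_2^2(\nu_{k+1},\pi)\le[(1-mh)W_2(\nu_k,\pi)+C_{\mathrm{FD}}]^2+B_{\mathrm{FD}}^2$, and Lemma~1 of \cite{sun23}. Your explicit check that the mean-zero noise is $L_2$-orthogonal to the other terms is a detail the paper leaves implicit under ``the same Langevin coupling argument''. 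Strictly, that check needs the conditioning to include $L_0$ and the whole Brownian path on $[0,h]$, not just the increment, which holds if the coupling variable is placed in $\mathcal F_k$ and the Brownian bridge is drawn independently.
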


\begin{proof}
Decompose the estimator as
\[
\widetilde G_{\mathrm{FD}}(X_k)
=
G_{\mathrm{FD}}(X_k)+\eta_k,
\]
where
\[
[G_{\mathrm{FD}}(X_k)]_i
=
\frac{
f(X_k+ce_i)-f(X_k-ce_i)
}{2c}
\]
and
\[
[\eta_k]_i
=
\frac{
\varepsilon_{k,i}^{(+)}
-
\varepsilon_{k,i}^{(-)}
}{2c}.
\]
A third-order Taylor expansion along the $i$th coordinate gives
\[
\left|
[G_{\mathrm{FD}}(X_k)]_i
-
\partial_i f(X_k)
\right|
\le
\frac{c^2M_3}{6}.
\]
Therefore,
\begin{equation}
\left\|
G_{\mathrm{FD}}(X_k)-\nabla f(X_k)
\right\|_2
\le
\frac{c^2M_3\sqrt p}{6}.
\label{eq:fdsa-deterministic-bias}
\end{equation}
By \eqref{eq:fdsa-noise-mean},
$\mathbb E[\eta_k\mid\mathcal F_k]=0$, and
\eqref{eq:fdsa-noise-second-moment} gives
\begin{align}
\mathbb E\left[
\|\eta_k\|_2^2
\mid\mathcal F_k
\right]
&=
\sum_{i=1}^p
\frac{
\mathbb E[
(\varepsilon_{k,i}^{(+)}
-\varepsilon_{k,i}^{(-)})^2
\mid\mathcal F_k]
}{4c^2}
\nonumber\\
&\le
\frac{p\delta^2}{2c^2}.
\label{eq:fdsa-noise-vector-bound}
\end{align}
Since the LMC update multiplies the gradient estimator by $h$,
the squared update contribution of the oracle noise is
\begin{equation}
h^2\,
\mathbb E\left[
\|\eta_k\|_2^2
\mid\mathcal F_k
\right]
\le
\frac{h^2p\delta^2}{2c^2}.
\label{eq:fdsa-noise-update-bound}
\end{equation}
Applying the same Langevin coupling argument as in the proof of
Theorem~1, and using
\eqref{eq:fdsa-deterministic-bias}--\eqref{eq:fdsa-noise-update-bound},
gives
\begin{equation}
W_2^2(\nu_{k+1},\pi)
\le
\left[
(1-mh)W_2(\nu_k,\pi)
+
C_{\mathrm{FD}}
\right]^2
+
B_{\mathrm{FD}}^2.
\label{eq:fdsa-one-step-squared}
\end{equation}
Applying Lemma~1 of \cite{sun23} to
\eqref{eq:fdsa-one-step-squared} yields
\eqref{eq:fdsa-sharp-constant-bound}.
Finally,
\[
\frac{C_{\mathrm{FD}}}{A}
=
\frac{7\sqrt2M}{6m}\sqrt{hp}
+
\frac{M_3}{6m}c^2\sqrt p,
\]
and
\[
\frac{B_{\mathrm{FD}}^2}
{C_{\mathrm{FD}}+\sqrt{A B_{\mathrm{FD}}^2}}
\le
\frac{B_{\mathrm{FD}}}{\sqrt A}
=
\frac{\delta}{c}\sqrt{\frac{hp}{2m}}.
\]
Substitution into \eqref{eq:fdsa-sharp-constant-bound} proves
\eqref{eq:fdsa-simplified-constant-bound}.
\end{proof}
\begin{theorem}[Explicit parameter choice for LMC-FDSA]
\label{thm:fdsa-explicit}
Assume the conditions of Theorem~\ref{thm:fdsa-constant}. Fix
$\epsilon\in(0,1)$, and choose
\begin{align}
c^2
&:=
\frac{3m\epsilon}{2M_3\sqrt p},
\label{eq:fdsa-explicit-c}\\
h
&:=
\min\left\{
\frac{2}{m+M},
\frac{9m^2\epsilon^2}{392M^2p},
\frac{3m^2\epsilon^3}
{16\delta^2M_3p^{3/2}}
\right\},
\label{eq:fdsa-explicit-h}
\end{align}
where the final constraint in \eqref{eq:fdsa-explicit-h} is
omitted when $\delta=0$. Let
\begin{equation}
N
:=
\left\lceil
\frac{1}{mh}
\log\left(
\max\left\{
1,\frac{4W_2(\nu_0,\pi)}{\epsilon}
\right\}
\right)
\right\rceil.
\label{eq:fdsa-explicit-N}
\end{equation}
Then
\[
W_2(\nu_N,\pi)\le\epsilon.
\]
Since each LMC-FDSA iteration uses $2p$ function evaluations,
$\operatorname{OracleCalls}_{\mathrm{FD}}=2pN$. In particular,
\begin{align}
\operatorname{OracleCalls}_{\mathrm{FD}}
&=
\widetilde{\mathcal O}\left(
\frac{p^2}{\epsilon^2}
+
\frac{\delta^2p^{5/2}}{\epsilon^3}
\right),
\label{eq:fdsa-complexity-general}\\
\operatorname{OracleCalls}_{\mathrm{FD}}
&=
\widetilde{\mathcal O}\left(
\frac{p^2}{\epsilon^2}
\right),
\qquad \delta=0,
\label{eq:fdsa-complexity-zero}
\end{align}
where problem-dependent constants are suppressed.
\end{theorem}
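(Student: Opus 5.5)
The plan is to mirror the proof of Theorem~\ref{thm:corrected-explicit}. We split the accuracy budget $\epsilon$ into four pieces of size $\epsilon/4$, one for each term of the simplified constant-parameter bound \eqref{eq:fdsa-simplified-constant-bound} in Theorem~\ref{thm:fdsa-constant}. The constraint $h\le 2/(m+M)$ is built into \eqref{eq:fdsa-explicit-h}, so Theorem~\ref{thm:fdsa-constant} applies. The complexity claims then follow by reading off $1/(mh)$ and multiplying by the per-iteration cost $2p$.

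\textbf{Contraction term.} Using $1-a\le e^{-a}$ together with the choice \eqref{eq:fdsa-explicit-N}, we get $(1-mh)^N W_2(\nu_0,\pi)\le \exp(-mhN)\,W_2(\nu_0,\pi)\le \epsilon/4$. The $\max\{1,\cdot\}$ in \eqref{eq:fdsa-explicit-N} covers the case $W_2(\nu_0,\pi)\le\epsilon/4$.

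\textbf{Discretization term.} The second constraint in \eqref{eq:fdsa-explicit-h} gives $\frac{7\sqrt2 M}{6m}\sqrt{hp}\le\frac{7\sqrt2M}{6m}\cdot\frac{3m\epsilon}{14\sqrt2 M}=\epsilon/4$.

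\textbf{Bias term.} Substituting \eqref{eq:fdsa-explicit-c} gives exactly $\frac{M_3}{6m}c^2\sqrt p=\epsilon/4$.

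\textbf{Noise term.} We need $\frac{\delta^2 hp}{2mc^2}\le\epsilon^2/16$. Substituting $c^2$, this becomes $h\,\frac{\delta^2 M_3 p^{3/2}}{3m^2\epsilon}\le \frac{\epsilon^2}{16}$, which is exactly the third constraint in \eqref{eq:fdsa-explicit-h}. When $\delta=0$ this term vanishes, so that constraint may be dropped. Summing the four pieces gives $W_2(\nu_N,\pi)\le\epsilon$.

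\textbf{Complexity.} From \eqref{eq:fdsa-explicit-h},
\[
\frac{1}{mh}=\mathcal O\!\left(1+\frac{M^2p}{m^3\epsilon^2}+\frac{\delta^2M_3p^{3/2}}{m^3\epsilon^3}\right).
\]
Hence $N=\widetilde{\mathcal O}(p/\epsilon^2+\delta^2p^{3/2}/\epsilon^3)$, where the logarithmic factor comes from \eqref{eq:fdsa-explicit-N}. Multiplying by $2p$ calls per iteration yields \eqref{eq:fdsa-complexity-general}, and setting $\delta=0$ yields \eqref{eq:fdsa-complexity-zero}.

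No step is a genuine obstacle. The only point needing care is the constant arithmetic in the noise-term verification, in particular checking that the factor $3m^2/16$ in the third constraint matches after substituting $c^2$.
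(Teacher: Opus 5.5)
Your proposal is correct and matches the paper's proof essentially step for step: the same $\epsilon/4$ split over the four terms of \eqref{eq:fdsa-simplified-constant-bound}, the same constraint checks (your discretization and noise-term arithmetic is right), and the same complexity read-off from $1/(mh)$ multiplied by $2p$ calls per iteration. The only implicit point, which the paper also leaves unstated, is that $(1-mh)^N\le e^{-mhN}$ needs $0\le mh\le 1$; this holds because $h\le 2/(m+M)\le 1/m$.
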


\begin{proof}
We allocate an error budget of $\epsilon/4$ to each term in
\eqref{eq:fdsa-simplified-constant-bound}. The choices of $N$ and
$h$ give
\[
(1-mh)^N W_2(\nu_0,\pi)
\le
e^{-mhN}W_2(\nu_0,\pi)
\le
\frac{\epsilon}{4}
\]
and
\[
\frac{7\sqrt2M}{6m}\sqrt{hp}
\le
\frac{\epsilon}{4}.
\]
The choice of $c$ gives
\[
\frac{M_3}{6m}c^2\sqrt p
=
\frac{\epsilon}{4}.
\]
For $\delta>0$, the final constraint in
\eqref{eq:fdsa-explicit-h} implies
\begin{align*}
\left(
\frac{\delta}{c}
\sqrt{\frac{hp}{2m}}
\right)^2
&=
\frac{\delta^2hp}{2mc^2}\\
&=
\frac{\delta^2M_3p^{3/2}}{3m^2\epsilon}\,h
\le
\frac{\epsilon^2}{16}.
\end{align*}
Thus the oracle-noise term is also at most $\epsilon/4$. When
$\delta=0$, this term vanishes. Summing the four bounds proves
$W_2(\nu_N,\pi)\le\epsilon$.
Moreover,
\[
\frac{1}{mh}
=
\mathcal O\left(
1+
\frac{M^2p}{m^3\epsilon^2}
+
\frac{\delta^2M_3p^{3/2}}{m^3\epsilon^3}
\right).
\]
Combining this estimate with
$\operatorname{OracleCalls}_{\mathrm{FD}}=2pN$ proves
\eqref{eq:fdsa-complexity-general} and
\eqref{eq:fdsa-complexity-zero}.
\end{proof}

\begin{theorem}[Shrinking parameters for LMC-FDSA]
\label{thm:fdsa-shrinking}
Assume the conditions of Theorem~\ref{thm:fdsa-constant}. Let
$0<h_k\le2/(m+M)$ and $c_k>0$ satisfy
\begin{equation}
h_k\to0,
\qquad
c_k\to0,
\qquad
\sum_{k=0}^{\infty}h_k=\infty,
\qquad
\frac{\delta^2h_k}{c_k^2}\to0.
\label{eq:fdsa-shrinking-conditions}
\end{equation}
Then
\begin{equation}
\lim_{k\to\infty}W_2(\nu_k,\pi)=0.
\label{eq:fdsa-shrinking-convergence}
\end{equation}
If $\delta>0$ and $M_3>0$, choose
\begin{equation}
h_k=\frac{a}{k+k_0},
\qquad
c_k^2
=
\left(
\frac{9m\delta^2h_k}{2M_3^2}
\right)^{1/3},
\label{eq:fdsa-positive-noise-rate-parameters}
\end{equation}
where $a>2/(3m)$ and
$k_0\ge\max\{1,a(m+M)/2\}$. Then there exists $D>0$ such that
\begin{equation}
W_2(\nu_k,\pi)
\le
\frac{\sqrt D}{(k+k_0)^{1/3}}
=
\mathcal O(k^{-1/3}).
\label{eq:fdsa-positive-noise-rate}
\end{equation}
Up to problem-dependent constants, the leading noise-dependent
contribution scales as
$\mathcal O(\delta^{2/3}\sqrt p\,k^{-1/3})$.

If $\delta=0$, choose
\begin{equation}
h_k=\frac{a}{k+k_0},
\qquad
c_k^2=\sqrt{h_k},
\label{eq:fdsa-zero-noise-rate-parameters}
\end{equation}
where $a>1/m$ and
$k_0\ge\max\{1,a(m+M)/2\}$. Then
\begin{equation}
W_2(\nu_k,\pi)=\mathcal O(k^{-1/2}).
\label{eq:fdsa-zero-noise-rate}
\end{equation}
\end{theorem}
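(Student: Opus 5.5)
The plan is to follow the SPSA argument of Theorem~\ref{thm:shrinking-noisy} and Theorem~\ref{thm:shrinking-rate-delta-positive}, with the LMC-FDSA one-step inequality \eqref{eq:fdsa-one-step-squared} in place of \eqref{eq:w2_onestep}. With $h=h_k$ and $c=c_k$ inserted into \eqref{eq:fdsa-ABC}, that inequality reads
\[
W_2^2(\nu_{k+1},\pi)\le\bigl[(1-mh_k)W_2(\nu_k,\pi)+C_{\mathrm{FD},k}\bigr]^2+B_{\mathrm{FD},k}^2 .
\]
It holds for each $k$ separately, because the coupling argument in Theorem~\ref{thm:fdsa-constant} is one-step. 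Since $0<mh_k\le1$, the same Young-inequality step as in \eqref{eq:shrinking-square-bound} linearises it:
\[
W_2^2(\nu_{k+1},\pi)\le(1-mh_k)W_2^2(\nu_k,\pi)+\frac{C_{\mathrm{FD},k}^2}{mh_k}+B_{\mathrm{FD},k}^2 .
\]
Using $(a+b)^2\le 2a^2+2b^2$, the remainder divided by $h_k$ is at most
\[
\frac{49M^2p}{9m}h_k+\frac{M_3^2p}{18m}c_k^4+\frac{p\delta^2}{2}\frac{h_k}{c_k^2}.
\]

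For the convergence claim, each of these three terms tends to $0$ under \eqref{eq:fdsa-shrinking-conditions}. The third term needs only $\delta^2h_k/c_k^2\to0$, which is why the condition carries the factor $\delta^2$; when $\delta=0$ it vanishes identically. So the remainder is $o(h_k)$. Fix $\eta>0$ and choose $k_\eta$ beyond which the remainder is at most $m\eta h_k$. For $k\ge k_\eta$ this gives the centered recursion
\[
W_2^2(\nu_{k+1},\pi)-\eta\le(1-mh_k)\bigl[W_2^2(\nu_k,\pi)-\eta\bigr],
\]
exactly as in \eqref{eq:shrinking-centered-recursion}. Since $\sum h_k=\infty$, the product $\prod(1-mh_j)\to0$, so $\limsup_k W_2^2(\nu_k,\pi)\le\eta$ for every $\eta>0$.

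For the rate with $\delta>0$, view the two $c_k$-dependent terms as a function of $u=c_k^2$:
\[
\frac{M_3^2p}{18m}h_ku^2+\frac{p\delta^2}{2}\frac{h_k^2}{u}.
\]
Setting the derivative to zero gives $u^3=9m\delta^2h_k/(2M_3^2)$, which is the stated $c_k^2$ and matches the SPSA choice with $p^3$ removed. At this choice the sum is a constant times $p\,(\delta^2)^{2/3}M_3^{2/3}m^{-1/3}h_k^{5/3}$. Because $h_k\le1$, the $h_k^2$ terms are absorbed, and the one-step remainder is bounded by $Qh_k^{5/3}$. With $h_k=a/(k+k_0)$ this produces the recursion \eqref{eq:harmonic-wasserstein-recursion}. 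The induction with $D\ge\max\{k_0^{2/3}W_2^2(\nu_0,\pi),\,Qa^{5/3}/(ma-2/3)\}$ then goes through verbatim; it uses convexity of $x^{-2/3}$ and needs $a>2/(3m)$. Taking square roots, the noise contribution to $\sqrt D$ scales as $\sqrt{p\,\delta^{4/3}}=\delta^{2/3}\sqrt p$.

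For $\delta=0$, take $c_k^2=\sqrt{h_k}$. Then $c_k^4=h_k$, and every remainder term is $O(h_k^2)$, giving
\[
W_2^2(\nu_{k+1},\pi)\le(1-ma/(k+k_0))W_2^2(\nu_k,\pi)+Q'a^2/(k+k_0)^2 .
\]
An induction with target $D/(k+k_0)$ needs $(ma-1)D\ge Q'a^2$, which holds for $a>1/m$, together with $(k+k_0+1)^{-1}\ge(k+k_0)^{-1}-(k+k_0)^{-2}$. This yields $W_2=O(k^{-1/2})$.

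The main obstacle is justifying the per-step version of \eqref{eq:fdsa-one-step-squared} under varying $(h_k,c_k)$. This requires that Lemmas~2 and~4 of \cite{dalalyan19} apply with step $h_k$. It also requires the cross term between the zero-mean noise $h_k\eta_k$ and the remaining terms to vanish, which rests on the conditional mean-zero and independence assumptions. After that, only the $c_k$ optimisation calls for care to recover the exact constant.
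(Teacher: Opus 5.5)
Your proposal is correct and follows the paper's proof essentially step for step. It uses the per-step FDSA coupling inequality and the Young-inequality linearisation from Theorem~\ref{thm:shrinking-noisy}, the same remainder-over-$h_k$ bound, the $\eta$-centred contraction for convergence, and the minimisation in $c_k^2$ followed by the induction of Theorem~\ref{thm:shrinking-rate-delta-positive} (or its analogue with target $D/(k+k_0)$ when $\delta=0$). The only slip is the claim $h_k\le 1$, which need not hold since only $h_k\le a/k_0\le 2/(m+M)$ is guaranteed; uniform boundedness still gives $h_k^2\le (2/(m+M))^{1/3}h_k^{5/3}$, which is how the paper absorbs those terms into $Qh_k^{5/3}$.
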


\begin{proof}
For varying $h_k$ and $c_k$, define
\[
C_{\mathrm{FD},k}
:=
\frac{7\sqrt2}{6}M(h_k^3p)^{1/2}
+
\frac16h_kc_k^2M_3\sqrt p
\]
and
\[
B_{\mathrm{FD},k}^2
:=
\frac{h_k^2p\delta^2}{2c_k^2}.
\]
The one-step argument in the proof of
Theorem~\ref{thm:fdsa-constant} gives
\[
W_2^2(\nu_{k+1},\pi)
\le
\left[
(1-mh_k)W_2(\nu_k,\pi)
+
C_{\mathrm{FD},k}
\right]^2
+
B_{\mathrm{FD},k}^2.
\]
Using the same Young inequality as in the proof of Theorem~5,
\begin{equation}
W_2^2(\nu_{k+1},\pi)
\le
(1-mh_k)W_2^2(\nu_k,\pi)
+
\frac{C_{\mathrm{FD},k}^2}{mh_k}
+
B_{\mathrm{FD},k}^2.
\label{eq:fdsa-shrinking-linear-recursion}
\end{equation}
Furthermore,
\begin{align}
&\frac{1}{h_k}
\left\{
\frac{C_{\mathrm{FD},k}^2}{mh_k}
+
B_{\mathrm{FD},k}^2
\right\}
\nonumber\\
&\quad\le
\frac{49M^2p}{9m}h_k
+
\frac{M_3^2p}{18m}c_k^4
+
\frac{p\delta^2}{2}\frac{h_k}{c_k^2}.
\label{eq:fdsa-shrinking-remainder-ratio}
\end{align}
Under \eqref{eq:fdsa-shrinking-conditions}, the right-hand side
converges to zero. Hence,
\[
\frac{C_{\mathrm{FD},k}^2}{mh_k}
+
B_{\mathrm{FD},k}^2
=
o(h_k).
\]
The contraction argument used in the proof of Theorem~5 therefore
implies \eqref{eq:fdsa-shrinking-convergence}.
For $\delta>0$, the two $c_k$-dependent terms in
\eqref{eq:fdsa-shrinking-remainder-ratio} are minimized by
\[
c_k^2
=
\left(
\frac{9m\delta^2h_k}{2M_3^2}
\right)^{1/3}.
\]
At this choice, the right-hand side of
\eqref{eq:fdsa-shrinking-remainder-ratio} is
$\mathcal O(h_k^{2/3})$, so
\[
\frac{C_{\mathrm{FD},k}^2}{mh_k}
+
B_{\mathrm{FD},k}^2
\le
Qh_k^{5/3}
\]
for some constant $Q>0$. Substituting
$h_k=a/(k+k_0)$ into
\eqref{eq:fdsa-shrinking-linear-recursion} gives
\[
W_2^2(\nu_{k+1},\pi)
\le
\left(
1-\frac{ma}{k+k_0}
\right)
W_2^2(\nu_k,\pi)
+
\frac{Qa^{5/3}}{(k+k_0)^{5/3}}.
\]
Since $ma>2/3$, the induction argument in the proof of Theorem~6
gives
\[
W_2^2(\nu_k,\pi)
\le
\frac{D}{(k+k_0)^{2/3}},
\]
which proves \eqref{eq:fdsa-positive-noise-rate}.
When $\delta=0$ and $c_k^2=\sqrt{h_k}$,
\eqref{eq:fdsa-shrinking-remainder-ratio} implies
\[
\frac{C_{\mathrm{FD},k}^2}{mh_k}
\le
Qh_k^2
\]
for some $Q>0$. Thus,
\[
W_2^2(\nu_{k+1},\pi)
\le
\left(
1-\frac{ma}{k+k_0}
\right)
W_2^2(\nu_k,\pi)
+
\frac{Qa^2}{(k+k_0)^2}.
\]
Because $ma>1$, induction gives
$W_2^2(\nu_k,\pi)=\mathcal O(k^{-1})$, and hence
$W_2(\nu_k,\pi)=\mathcal O(k^{-1/2})$.
\end{proof}

\end{appendices}



\begin{thebibliography}{99}
\setlength{\itemsep}{0.35em}
\setlength{\parsep}{0pt}

\bibitem{brooks2011handbook}
S.~Brooks, A.~Gelman, G.~L.~Jones, and X.-L.~Meng, Eds.,
\emph{Handbook of Markov Chain Monte Carlo}.
Chapman \& Hall/CRC, 2011.

\bibitem{robert2004montecarlo}
C.~P.~Robert and G.~Casella,
\emph{Monte Carlo Statistical Methods}, 2nd ed.
Springer, 2004.

\bibitem{metropolis1953}
N.~Metropolis, A.~W.~Rosenbluth, M.~N.~Rosenbluth, A.~H.~Teller, and E.~Teller,
``Equation of state calculations by fast computing machines,''
\emph{Journal of Chemical Physics},
vol.~21, no.~6, pp.~1087--1092, 1953.

\bibitem{hastings1970}
W.~K.~Hastings,
``Monte Carlo sampling methods using Markov chains and their applications,''
\emph{Biometrika},
vol.~57, no.~1, pp.~97--109, 1970.

\bibitem{jiang2023sbi}
H.~Jiang, Y.~Wang, and Y.~Yang,
``Simulation-based inference via Langevin dynamics with score matching,''
arXiv preprint arXiv:2509.03853, 2025.

\bibitem{andrieu2010pmcmc}
C.~Andrieu, A.~Doucet, and R.~Holenstein,
``Particle Markov chain Monte Carlo methods,''
\emph{Journal of the Royal Statistical Society: Series B (Statistical Methodology)},
vol.~72, no.~3, pp.~269--342, 2010.

\bibitem{carpenter2017stan}
B.~Carpenter, A.~Gelman, M.~D.~Hoffman, D.~Lee, B.~Goodrich,
M.~Betancourt, M.~Brubaker, J.~Guo, P.~Li, and A.~Riddell,
``Stan: A probabilistic programming language,''
\emph{Journal of Statistical Software},
vol.~76, no.~1, pp.~1--32, 2017.

\bibitem{ronquist2003mrbayes}
F.~Ronquist and J.~P.~Huelsenbeck,
``MrBayes 3: Bayesian phylogenetic inference under mixed models,''
\emph{Bioinformatics},
vol.~19, no.~12, pp.~1572--1574, 2003.

\bibitem{neal2011hmc}
R.~M.~Neal,
``MCMC using Hamiltonian dynamics,''
in \emph{Handbook of Markov Chain Monte Carlo},
S.~Brooks, A.~Gelman, G.~L.~Jones, and X.-L.~Meng, Eds.
Chapman \& Hall/CRC, 2011, ch.~5, pp.~113--162.

\bibitem{welling2011sgld}
M.~Welling and Y.~W.~Teh,
``Bayesian learning via stochastic gradient Langevin dynamics,''
in \emph{Proceedings of the 28th International Conference on Machine Learning (ICML)},
2011, pp.~681--688.

\bibitem{roberts1996langevin}
G.~O.~Roberts and R.~L.~Tweedie,
``Exponential convergence of Langevin distributions and their discrete approximations,''
\emph{Bernoulli},
vol.~2, no.~4, pp.~341--363, 1996.

\bibitem{roberts1998optimal}
G.~O.~Roberts and J.~S.~Rosenthal,
``Optimal scaling of discrete approximations to Langevin diffusions,''
\emph{Journal of the Royal Statistical Society: Series B (Statistical Methodology)},
vol.~60, no.~1, pp.~255--268, 1998.

\bibitem{cheng2018underdamped}
X.~Cheng, N.~S.~Chatterji, P.~L.~Bartlett, and M.~I.~Jordan,
``Underdamped Langevin MCMC: A non-asymptotic analysis,''
in \emph{Proceedings of the 31st Conference on Learning Theory (COLT)},
2018, pp.~300--323.

\bibitem{sun23}
S.~Sun and J.~C.~Spall,
``Langevin Monte Carlo with SPSA-approximated gradients,''
in \emph{Proceedings of the 57th Annual Conference on Information Sciences and Systems (CISS)},
2023, pp.~1--6.

\bibitem{chartrand2011}
R.~Chartrand,
``Numerical differentiation of noisy, nonsmooth data,''
\emph{ISRN Applied Mathematics},
vol.~2011, Art.~ID 164564, 11 pp., 2011.

\bibitem{spall92}
J.~C.~Spall,
``Multivariate stochastic approximation using a simultaneous perturbation gradient approximation,''
\emph{IEEE Transactions on Automatic Control},
vol.~37, no.~3, pp.~332--341, 1992.

\bibitem{blakney2019fdvsspsa}
A.~Blakney and J.~Zhu,
``A comparison of the finite difference and simultaneous perturbation gradient estimation methods with noisy function evaluations,''
in \emph{Proceedings of the 53rd Annual Conference on Information Sciences and Systems (CISS)},
2019, pp.~1--6.

\bibitem{spall94}
J.~C.~Spall,
``Developments in stochastic optimization algorithms with gradient approximations based on function measurements,''
in \emph{Proceedings of the 1994 Winter Simulation Conference},
1994, pp.~207--214.

\bibitem{spall05}
J.~C.~Spall,
\emph{Introduction to Stochastic Search and Optimization: Estimation, Simulation, and Control}.
John Wiley \& Sons, 2003.

\bibitem{chen2021arxiv}
Y.~Chen,
``Theoretical study and comparison of SPSA and RDSA algorithms,''
arXiv preprint arXiv:2107.12771, 2021.

\bibitem{peng2023acc}
D.~Peng, Y.~Chen, and J.~C.~Spall,
``Formal comparison of simultaneous perturbation stochastic approximation and random direction stochastic approximation,''
in \emph{Proceedings of the 2023 American Control Conference (ACC)},
2023, pp.~744--749.

\bibitem{dingli2021rcdvr}
Z.~Ding and Q.~Li,
``Langevin Monte Carlo: random coordinate descent and variance reduction,''
\emph{Journal of Machine Learning Research},
vol.~22, no.~205, pp.~1--51, 2021.

\bibitem{spall1997onemeasurement}
J.~C.~Spall,
``A one-measurement form of simultaneous perturbation stochastic approximation,''
\emph{Automatica},
vol.~33, no.~1, pp.~109--112, 1997.

\bibitem{liu2020onepointzosgld}
L. Liu and Z. Wang, ``One-point gradient estimators for
zeroth-order stochastic gradient Langevin dynamics,'' in
\emph{OPT 2020: 12th Annual Workshop on Optimization for Machine
Learning}, 2020. [Online]. Available:
\url{https://opt-ml.org/oldopt/papers/2020/paper_96.pdf}

\bibitem{pavliotis2022}
G.~A.~Pavliotis, A.~M.~Stuart, and U.~Vaes,
``Derivative-free Bayesian inversion using multiscale dynamics,''
\emph{SIAM Journal on Applied Dynamical Systems},
vol.~21, no.~1, pp.~284--326, 2022.

\bibitem{bao2025}
E.~Bao, Y.~Jiang, F.~Wei, X.~Xiao, Z.~Li, Y.~Li, and B.~Ding,
``Unlocking the power of differentially private zeroth-order optimization for fine-tuning LLMs,''
in \emph{Proceedings of the 34th USENIX Security Symposium (USENIX Security 25)},
2025, pp.~1569--1588.

\bibitem{song2019score}
Y.~Song and S.~Ermon,
``Generative modeling by estimating gradients of the data distribution,''
in \emph{Advances in Neural Information Processing Systems},
vol.~32, 2019, pp.~11895--11907.

\bibitem{dalalyan19}
A.~S.~Dalalyan and A.~Karagulyan,
``User-friendly guarantees for the Langevin Monte Carlo with inaccurate gradient,''
\emph{Stochastic Processes and their Applications},
vol.~129, no.~12, pp.~5278--5311, 2019.

\bibitem{hiruntiaranakul2024spsahmc}
S.~Hiruntiaranakul,
\emph{SPSA-augmented Hamiltonian Monte Carlo},
Master's thesis, Johns Hopkins University, Baltimore, MD, USA, 2024.
[Online]. Available:
\url{https://jscholarship.library.jhu.edu/bitstreams/fd72f4af-722d-4aed-afcb-94ea9d69616e/download}

\bibitem{Roy2022}
A.~Roy, L.~Shen, K.~Balasubramanian, and S.~Ghadimi,
``Stochastic zeroth-order discretizations of Langevin diffusions for Bayesian inference,''
\emph{Bernoulli},
vol.~28, no.~3, pp.~1810--1834, 2022.

\bibitem{cao2011acc}
X.~Cao,
``Preliminary results on non-Bernoulli distribution of perturbations for simultaneous perturbation stochastic approximation,''
in \emph{Proceedings of the 2011 American Control Conference (ACC)},
2011, pp.~2669--2670.

\bibitem{maeda1997}
Y.~Maeda and R.~J.~P.~de~Figueiredo,
``Learning rules for neuro-controller via simultaneous perturbation,''
\emph{IEEE Transactions on Neural Networks},
vol.~8, no.~5, pp.~1119--1130, 1997.

\bibitem{durmus2017}
A.~Durmus and E.~Moulines,
``Nonasymptotic convergence analysis for the unadjusted Langevin algorithm,''
\emph{The Annals of Applied Probability},
vol.~27, no.~3, pp.~1551--1587, 2017.

\end{thebibliography}
\end{document}